\documentclass[11pt]{article}

\usepackage[top=1in,bottom=1in,left=1in,right=1in]{geometry}
\usepackage{amsmath,amsthm,amssymb}
\usepackage{mathtools}
\usepackage{enumerate}
\usepackage{caption}
\usepackage{xcolor}
\usepackage{tikz}
\usetikzlibrary{positioning,arrows.meta,shapes.geometric}
\usepackage[colorlinks=true,linkcolor=blue,citecolor=blue,urlcolor=blue]{hyperref}

\newtheorem{theorem}{Theorem}[section]
\newtheorem{corollary}[theorem]{Corollary}
\newtheorem{proposition}[theorem]{Proposition}
\newtheorem{lemma}[theorem]{Lemma}
\newtheorem{remark}[theorem]{Remark}

\theoremstyle{definition}
\newtheorem{definition}[theorem]{Definition}

\newcommand{\GG}{\mathcal{G}}
\newcommand{\R}{\mathbb{R}}
\DeclareMathOperator{\sgn}{sgn}

\numberwithin{equation}{section}

\begin{document}

\title{Instantaneous arithmetic computation via ratio-encoding in chemical reaction networks}

\author{David F. Anderson\footnote{University of Wisconsin-Madison (anderson$@$math.wisc.edu)}~ and Badal Joshi\footnote{Department of Mathematics, California State University San Marcos (bjoshi$@$csusm.edu). Corresponding author.}}

\date{}

\maketitle

\begin{abstract}

We develop a general framework for \emph{instantaneous} arithmetic computation
using chemical reaction networks. Numerical values are encoded by ratios of
species concentrations:  an extended nonnegative value $a\in[0,+\infty]$ is represented by a
computational pair $(A_0,A_1)$ through the ratio $a=a_1/a_0$. For this ratio
encoding, our main theoretical result is a feedforward compositionality theorem:
if a collection of modules satisfies two easily checked structural conditions,
then any admissible finite feedforward composition computes the corresponding arithmetic
expression instantaneously, meaning that every intermediate and output ratio is
correct at every positive time.

We make this framework concrete by constructing reaction network modules for
four elementary operations---identification, inversion, multiplication, and
addition---and verifying that each satisfies these structural conditions.  As applications, we show that truncated
power series and matrix products can be computed instantaneously using reaction
networks whose size reflects the number of arithmetic operations in the
underlying computation. We also extend the nonnegative ratio encoding to signed
quantities by representing a real number as the difference of two nonnegative
ratio-encoded values; this allows addition and multiplication over real-valued
inputs. We discuss the practical implications of instantaneous computation,
including its relationship to other resource constraints such as the number of
chemical species required and the cost of reading out the final answer, which
requires division and, in the signed case, rectified subtraction.

\end{abstract}
{\bf Keywords:} molecular computation, computing with reaction networks, chemical computation, fractional representation, analog computation, computational speed.

\section{Introduction}

Recent theoretical and technological advances have made it possible to implement computation using designed chemical reaction networks \cite{anderson2021reaction, cappelletti2020stochastic, chen2023rate, chen2014deterministic, qian2011neural, QSW2011, qian2011simple, soloveichik2010dna}. A driving motivation for the present work is to keep mathematical theory in step with emerging hardware platforms --- such as DNA strand displacement technology \cite{soloveichik2010dna, qian2011scaling} --- so that algorithmic designs are available when physical implementations become feasible. In such computational designs, fundamental arithmetic operations such as addition and multiplication must be executed numerous times in any composite computation, and their speed is therefore of critical importance.

The literature on CRN computation is broad; we recall only the strands most relevant to the present work.
The computational power of CRNs has been studied from a number of angles. Among results in the discrete stochastic setting, Chen, Doty, and Soloveichik \cite{chen2014deterministic} showed that the class of functions deterministically computable by stochastic CRNs is precisely the semilinear functions, while Cappelletti, Ortiz-Mu\~noz, Anderson, and Winfree \cite{cappelletti2020stochastic} established that stochastic CRNs can approximate any discrete probability distribution with arbitrary accuracy at stationarity. In the continuous deterministic setting, Fages, Le~Guludec, Bournez, and Pouly \cite{fages2017strong} proved strong Turing-completeness of continuous CRNs; Chen, Doty, Reeves, and Soloveichik \cite{chen2023rate} showed that rate-independent computation by continuous CRNs corresponds exactly to the piecewise rational linear functions; and Anderson, Joshi, and Deshpande \cite{anderson2021reaction} constructed mass-action CRNs that implement arbitrary neural networks. 
Doty, Latifi, and Soloveichik \cite{doty2025analog} used a ratio representation similar to the one used in this paper for a different application --- to show that any mass-action ODE system can be implemented by a transcriptional network in which the only negative terms are linear degradation at a rate common to all species.
On the hardware side, the work of Soloveichik, Seelig, and Winfree \cite{soloveichik2010dna} demonstrated that DNA strand displacement can physically realize any CRN \cite{qian2011scaling, qian2011neural}, providing a concrete experimental platform for these theoretical constructions. 

In \cite{anderson2025arithmetic}, we introduced the concept of \emph{input-independent computational speed} for mass-action CRNs and showed that a suite of fundamental arithmetic operations can be computed within a time that is independent of the input values and scales only with the desired accuracy. Furthermore, these operations can be composed to produce computations of arbitrary complexity without any increase in computational time. The research in \cite{anderson2026computing} extends this program to the transcendental functions $e^x$ and $\ln x$, constructing reaction network modules that compute them directly --- without relying on power series --- and proving that any composite computation combining these with the arithmetic modules of \cite{anderson2025arithmetic} also runs at input-independent speed. In the present paper we pursue a different and in some ways more striking goal: \emph{instantaneous} computation, meaning that the encoded output is correct for \emph{all} $t > 0$, not merely asymptotically as $t \to \infty$.

To do this, we take inspiration from a series of papers by Salehi, Liu, Riedel,
Parhi, and collaborators \cite{salehi2017chemical, salehi2018computing, solanki2023computing}, which
developed a fractional-encoding approach to molecular computation motivated by
stochastic logic. 
Since our construction is best understood as a modification of
that viewpoint, we briefly recall their framework and then explain why our
present goal requires a different encoding.
 In stochastic logic, a number in
$[0,1]$ is interpreted as the probability that a random bit is equal to one;
logic gates then implement arithmetic operations at the level of these
probabilities.  Salehi et al.\ translated this viewpoint into deterministic
chemical reaction networks by representing a value using a pair of species.
Specifically, a pair of chemical species $(A_0,A_1)$ with concentrations
$(a_0,a_1)\in\mathbb R^2_{\ge 0}$ encodes a real number $a\in[0,1]$ via the map
\begin{equation} \label{fractional_rep_hist}
(a_0, a_1) \mapsto \frac{a_1}{a_0 + a_1} = a \in [0,1].
\end{equation}
Under this encoding, the interval $[0,1]$ is closed under multiplication $(a,b)\mapsto ab$, its complement $(a,b)\mapsto 1-ab$ and convex linear combination
$(a,b)\mapsto \sigma a+(1-\sigma)b$ for $\sigma\in[0,1]$.  
The authors of \cite{salehi2018computing} construct reaction network modules,
including multiplication and multiplexing/scaled-addition modules, that
implement these stochastic-logic operations, and they use these modules to build
CRNs for approximating more complicated functions. Crucially for the present
paper, these constructions are \textit{instantaneous} in the sense that the
relevant output fraction is correct for all $t>0$, not only asymptotically.
Thus, their work provides a systematic molecular implementation of
stochastic-logic computations using instantaneous fractional concentration
encodings.

For the stochastic-logic computations considered in
\cite{salehi2017chemical, salehi2018computing, solanki2023computing}, this representation is natural
and effective. For the purposes of the present paper, however, it leaves three
limitations that motivate a different ratio encoding.
\begin{enumerate}[(i)]
\item \emph{Restricted representation.} The representation \eqref{fractional_rep_hist} represents values in $[0,1]$;
Salehi et al.\ also discuss a related signed version representing values in
$[-1,1]$.

\item \emph{Non-standard arithmetic primitives.} The natural operations in
the stochastic-logic framework are complement, multiplication, and convex
linear combination, rather than the usual arithmetic operations of addition,
multiplication, and inversion.  In particular, $[0,1]$ is not closed under
ordinary addition, so standard arithmetic expressions and power series cannot
be represented directly without additional reformulation or scaling.

\item  \emph{Direct source loading.} 
Given an externally supplied $a\in[0,1]$, the natural normalized fractional pair $(a_0,a_1)=(1-a,a)$ requires forming the complement $1-a$ (and other choices likewise require a transformation of $a$). Under the ratio encoding used here, the same value is loaded directly as $(a_0,a_1)=(1,a)$, with no arithmetic preprocessing.
\end{enumerate}

We therefore modify the fractional-encoding idea by replacing the ratio $a_1/(a_0+a_1)$ with the ratio $a_1/a_0$: a pair $(A_0,A_1)$ with concentrations $(a_0,a_1)$ represents the value $a=a_1/a_0\in[0,+\infty]$. 
Under this encoding, the standard source convention $a_0(0)=1$, $a_1(0)=a$ requires no precomputation (Section~\ref{sec:fundamental}).
  We construct reaction network modules for four standard arithmetic operations --- identification, inversion, multiplication, and addition --- each of which computes instantaneously, and we describe their behavior at boundary values involving $+\infty$, including the explicitly stated cases in which a module is undefined (Section~\ref{sec:fundamental}).  We provide a rigorous mathematical framework showing that these modules can be composed arbitrarily to produce a computation of any complexity while preserving instantaneous computation throughout (Section~\ref{sec:composability}). The representation extends to all of $[-\infty, \infty]$ via a signed dual rail encoding (Section~\ref{sec:real}). As a consequence, truncated power series and matrix products can be computed instantaneously by reaction networks (Section~\ref{sec:power_series}). We also show that the structural conditions underlying our framework admit multiple implementations, offering flexibility for hardware constraints (Section~\ref{sec:alt_modules}). Finally, we discuss the practical implications of instantaneous computation, including its relationship to earlier work on input-independent speed and the role of output decoding in determining overall computation time (Section~\ref{sec:discussion}).   Throughout, the equations are written in nondimensional time and concentration variables, and all displayed dimensionless rate constants are chosen to be one; relaxing this normalization is a natural direction for future work.

One unavoidable feature of the ratio encoding approach is that the final output must be read off by an interpreter --- a step that lies outside the reaction network itself. For computations over $[0,+\infty]$, the output is a pair $(x_0, x_1)^T$ and decoding requires computing the ratio $x_1/x_0$, with the convention that $x_0 = 0$ is interpreted as $+\infty$. For computations over $[-\infty,+\infty]$, the output is a quadruple $(x_{p0}, x_{p1}, x_{n0}, x_{n1})^T$ and decoding requires two divisions followed by a subtraction, together with the appropriate handling of infinity cases. If the output interpretation is performed chemically, the operations needed --- division and rectified subtraction --- are available at input-independent speed via the design in \cite{anderson2025arithmetic}, making that paper a natural companion for the chemical decoding step. If the output interpretation is performed digitally, an efficient analog-digital interface is required to measure the relevant concentrations and perform the arithmetic on a digital platform.

\section{Fundamental design principles}
\label{sec:fundamental}

In this section we introduce the basic objects used throughout the paper.  We
first define, in Section \ref{sec:computational_pair}, computational pairs and the ratio encoding that represents elements
of $[0,+\infty]$.  In Section \ref{sec:computational_module_and_graph} we then describe computational modules and computational
graphs, which provide the language for composing elementary operations.  Next, in Section \ref{sec:source_value_encoding},
we specify how externally supplied source values are encoded as initial
conditions.  Finally, in Section \ref{sec:module_intro}, we present the four elementary reaction network modules
used in the rest of the paper: identification, inversion, multiplication, and
addition.  The verification that these modules have the claimed ratio-level
behavior is deferred to Section~\ref{sec:composability}.
Later, in Section~\ref{sec:real}, we extend the nonnegative ratio encoding introduced in this section to signed
quantities using a signed dual rail construction.

\subsection{Computational pair}
\label{sec:computational_pair}

A \emph{computational pair} is an ordered pair of chemical species
\[
Z := (Z_0,Z_1).
\]
We denote their concentrations at time $t$ by
\[
z(t) := (z_0(t),z_1(t))^T.
\]
We associate an element of $[0,+\infty]$ with this pair as follows: 
\begin{equation} \label{fractional_rep}
    (z_0, z_1) \in \R^2_{\ge 0} \mapsto z = 
    \frac{z_1}{z_0} \in [0, \infty], 
\end{equation}
assuming that at least one of $z_0$ or $z_1$ is positive. 
We associate $z = +\infty$ with any pair for which $z_0 = 0$ and $z_1 > 0$.
The individual concentrations $z_0$ and $z_1$ have no numerical significance in isolation; only their ratio $z_1/z_0$ carries meaning. Accordingly, we treat the pair $(Z_0,Z_1)$ as an inseparable computational unit.

We will say that a computational pair has the \emph{ratio invariance} property
on $(0,\infty)$ if the value associated with $(z_0(t),z_1(t))$ by
\eqref{fractional_rep} is well-defined and independent of $t$ for all $t>0$.
 In this case, the
pair represents a single numerical value throughout the positive-time evolution.

This property need not hold at $t=0$. In particular, we allow the initial
concentration vector of a computational pair to be
\[
(z_0(0),z_1(0))^T=(0,0)^T,
\]
in which case no numerical value is associated with the pair at time $0$.

\subsection{Computational module and computational graph}
\label{sec:computational_module_and_graph}

A {\em computational module} is a collection of two or more computational pairs with the following properties:
\begin{enumerate}[(i)]
    \item exactly one pair is designated as the output and the remaining (one or more) pairs designated as inputs, 
    \item a computational module is designed to compute a single elementary operation on the values encoded by the inputs and store the resulting value in the output pair.
\end{enumerate}
 
In this paper, we discuss modules that compute the operations of {\em identification}, {\em inversion}, {\em multiplication}, and {\em addition}.

A composite computation, made by composing elementary computational modules, is organized as a directed graph. 

\begin{definition}[Computational graph]
\label{def:computational_graph}
The \textit{computational graph} of a composite computation is a directed graph $\GG=(V,E)$ with the following properties:
\begin{itemize}
\item Each vertex $u\in V$ has a computational pair $(U_0,U_1)$ associated to it. 
\item The graph is \textit{acyclic} (no directed feedback loops).
\item A directed edge $(u\to v)\in E$ exists if and only if the computational
pair associated with $u$ is an input to the module associated with $v$.

\item A \textit{source vertex} is a vertex with in-degree zero; its computational
pair encodes an externally supplied source value.

\item Each non-source vertex $v$ is assigned a computational module $M_v$. The computational pair associated with $v$ is the output of this module, and the immediate predecessors of $v$ provide its inputs. Thus the in-degree of $v$ is the number of input pairs required by $M_v$.
\end{itemize}
\end{definition}

\begin{definition}[Admissibility]
\label{def:admissible}
A source-value assignment (or simply an input assignment) for a computational graph $\GG$ is \emph{admissible} if, proceeding through the graph in topological order, every non-source vertex receives input values for which its assigned module is defined. We also say that $\GG$ is \emph{admissible for that input assignment}. Thus, admissibility simply excludes undefined module evaluations.
\end{definition}

\begin{lemma}
     Suppose that an admissible input assignment is given at the source vertices of $\GG$. Then a unique value in $[0, + \infty]$ is assigned to every vertex in $\GG$, determined by applying the module $M_v$ at each non-source vertex $v$ to the values at its immediate predecessor vertices.
\end{lemma}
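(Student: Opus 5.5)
The plan is to argue by induction along a topological ordering of $\GG$. Since $\GG$ is a finite directed acyclic graph, its vertices can be listed as $v_1,\dots,v_n$ so that every edge $(v_i\to v_j)\in E$ has $i<j$. This is the standard fact that a finite DAG admits a topological order: repeatedly remove a vertex of in-degree zero, which must exist because otherwise following edges backward would produce a directed cycle. In particular, all source vertices can be placed first, and every non-source vertex appears after all of its immediate predecessors.

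The induction hypothesis for index $k$ is that each of $v_1,\dots,v_k$ carries a uniquely determined value in $[0,+\infty]$ obtained by the rule in the statement. For the base case and for any source vertex, the value is simply the externally supplied source value; the input assignment fixes it, so it is unique. For the inductive step, take a non-source vertex $v_{k+1}$ with module $M_{v_{k+1}}$. By Definition~\ref{def:computational_graph}, its in-degree equals the number of inputs of $M_{v_{k+1}}$, and its immediate predecessors have indices at most $k$. Hence their values are already uniquely determined. Admissibility (Definition~\ref{def:admissible}) says these values lie in the domain where $M_{v_{k+1}}$ is defined, so applying the elementary operation yields a single value in $[0,+\infty]$. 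That value is unique because the operation is a function.

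The only point needing care is that the assigned values do not depend on which topological order is chosen, and I expect this to be the main (though mild) obstacle. To handle it, I would avoid orders entirely in the uniqueness argument. Suppose two assignments $\phi,\psi:V\to[0,+\infty]$ both satisfy the local rule: they agree with the source values at sources, and at each non-source $v$ the value equals $M_v$ applied to the values of its predecessors. If $\phi\neq\psi$, choose a vertex $v$ where they differ that is minimal in the sense that no predecessor of $v$ is also a point of disagreement. Such a minimal vertex exists by acyclicity and finiteness. Then $v$ cannot be a source, and its predecessors carry equal values under $\phi$ and $\psi$, so $\phi(v)=\psi(v)$, a contradiction.

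Existence comes from the inductive construction above. I also have to check that admissibility, which is phrased in terms of proceeding in topological order, is consistent with this. It is: the inputs to each module are the predecessor values, and those are order-independent by the uniqueness argument.
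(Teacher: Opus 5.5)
Your proof is correct and follows the paper's argument: induction along a topological ordering, with source values fixed by the input assignment and admissibility guaranteeing that each module evaluation at a non-source vertex is defined. Your minimal-disagreement argument makes explicit the order-independence that the paper asserts in one line, from the fact that each non-source vertex has a unique producing module.
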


\begin{proof}
Since $\GG$ is acyclic, its vertices admit a topological ordering. Process the vertices in this order: source vertices carry their prescribed values. For each non-source vertex $v$, its value is uniquely determined by applying the operation of $M_v$ to the already-assigned values at its immediate predecessor vertices. By admissibility, each such module evaluation is defined. Since each non-source vertex has a unique producer, the assignment is well-defined and independent of the choice of topological ordering. The result follows by induction.
\end{proof}

\subsection{Source value encoding}
\label{sec:source_value_encoding}

A composite computation begins with a collection of externally specified  extended nonnegative values. These \emph{source values} are not produced by arithmetic
modules; rather, they are supplied through initial conditions on their
corresponding computational pairs. 

For a source value $a\in[0,+\infty]$, we encode $a$ by a computational pair
$(A_0,A_1)$ with concentration vector $(a_0(t),a_1(t))^T$. Since a ratio
representation is not unique, we fix the following convention.

\begin{definition}
Given a source value $a \in [0,+\infty]$, the \emph{standard encoding} is
defined by
\[
a_1(0)=a,\qquad a_0(0)=1 \qquad \text{if } a\in[0,\infty),
\]
and by
\[
a_1(0)=1,\qquad a_0(0)=0 \qquad \text{if } a=+\infty.
\]
\end{definition}

The usual extended-real conventions apply where the corresponding module is
defined: $a+\infty=+\infty$ for $a\in[0,+\infty]$, and
$a\cdot\infty=+\infty$ for $a\in(0,+\infty]$, while $0\cdot\infty$ is
undefined. The ratio representation has additional boundary
limitations: although $\infty+\infty$ is well-defined in the extended reals, the addition module leaves this case undefined (see Remark~\ref{rem:add_infty}). 
The undefined cases just described are exactly the module-specific boundary cases excluded by admissibility.

\subsection{Basic arithmetic modules}
\label{sec:module_intro}

We present four fundamental arithmetic modules which can be further combined to compute power series, matrix multiplications, and determinants.
In this section, we only give the basic modules; a complete justification of their operation and correctness of their output is provided in the next section.
Throughout, we assume mass action kinetics in nondimensional time and concentration variables, with every displayed dimensionless rate constant chosen to equal $1$.

\subsubsection{Identification module}

The value of $a \in [0, \infty]$ can be assigned from one computational pair to another  by the following reaction network: 

\begin{equation} \label{net:identification}
\begin{aligned}
    A_0 \to X_0, \quad \quad 
    A_1 \to X_1.
\end{aligned}
\end{equation}
The input is the computational pair $(A_0,A_1)$, representing $a = a_1/a_0$, and the output is the computational pair $(X_0,X_1)$, representing $x = x_1/x_0 = a$.

\subsubsection{Inversion (reciprocal) module}

The reciprocal of $a \in [0, \infty]$ is computed by the following reaction network:

\begin{equation} \label{net:inversion}
\begin{aligned}
    A_0 \to X_1, \quad \quad 
    A_1 \to X_0.
\end{aligned}
\end{equation}
The input is the computational pair $(A_0,A_1)$, representing $a = a_1/a_0$, and the output is the computational pair $(X_0,X_1)$, representing $x = x_1/x_0 = 1/a$.

\subsubsection{Multiplication module}

We define the {\em (ratio-encoded) multiplication map}, denoted $\otimes$: 
\begin{equation} \label{def:otimes}
\begin{pmatrix} a_0 \\ a_1 \end{pmatrix} \otimes \begin{pmatrix} b_0 \\ b_1 \end{pmatrix} = \begin{pmatrix} a_0 b_0 \\ a_1 b_1 \end{pmatrix}.
\end{equation}
The following reaction network module performs this multiplication $(a,b) \mapsto x = ab$, with the inputs encoded as $a \equiv (a_0,a_1)^T$ and $b \equiv (b_0,b_1)^T$ and the output as $x \equiv (x_0, x_1)^T$.  
\begin{equation} \label{net:multiplication}
\begin{aligned}
    A_0 + B_0 \to X_0, \quad 
    A_1 + B_1 \to X_1, \quad
    A_0 + B_1 \to 0, \quad 
    A_1 + B_0 \to 0.  
\end{aligned}
\end{equation}
The inputs are the computational pairs $(A_0,A_1)$ and $(B_0,B_1)$, representing $a = a_1/a_0$ and $b = b_1/b_0$ respectively, and the output is the computational pair $(X_0,X_1)$, representing $x = x_1/x_0 = a \cdot b$.

\subsubsection{Addition module}

We define the {\em (ratio-encoded) addition map}, denoted $\oplus$: 
\begin{equation} \label{def:oplus}
\begin{pmatrix} a_0 \\ a_1 \end{pmatrix} \oplus \begin{pmatrix} b_0 \\ b_1 \end{pmatrix} = \begin{pmatrix} a_0 b_0 \\ a_0 b_1 + a_1 b_0 \end{pmatrix}.
\end{equation}
The following module performs this addition $(a,b) \mapsto x = a + b$.  
\begin{equation} \label{net:addition}
\begin{aligned}
    A_0 + B_0 \to X_0, \quad
    A_0 + B_1 \to X_1, \quad
    A_1 + B_0 \to X_1, \quad
    A_1 + B_1 \to 0.
\end{aligned}
\end{equation}
The inputs are the computational pairs $(A_0,A_1)$ and $(B_0,B_1)$, representing $a = a_1/a_0$ and $b = b_1/b_0$ respectively, and the output is the computational pair $(X_0,X_1)$, representing $x = x_1/x_0 = a + b$.

\section{Feedforward composability and instantaneous correctness}
\label{sec:composability}

The main result of this paper is a composability theorem showing that ratio-encoded
reaction network modules can be assembled into feedforward computations whose
encoded values are correct for all positive times.

That result is established in this section. We begin with an abstract
single-module criterion, Lemma~\ref{lem:single_module}, which identifies the
mechanism underlying instantaneous computation at the level of ratios: an output
computational pair has a time-independent ratio whenever its production terms
have a fixed ratio and all downstream consumption enters through a common
depletion term. Next, we verify that the basic modules introduced in
Section~\ref{sec:module_intro}, namely identification, inversion,
multiplication, and addition, satisfy these conditions. Finally, in Theorem~\ref{thm:feedforward}, we show that
these same conditions imply instantaneous correctness of
arbitrary feedforward compositions of such modules.

Later, in Section~\ref{sec:alt_modules}, we present alternative constructions that
satisfy the same structural conditions, offering additional implementation
flexibility.  By Theorem~\ref{thm:feedforward}, these alternative modules may be substituted into a feedforward computation without changing the composability argument.

\subsection{Single-module ratio invariance}

We record an invariance property that is the core analytical mechanism
underlying the later compositionality results.  
In the lemma, the production
terms $F_0(t)$ and $F_1(t)$ encode the action of the computational module itself:
different choices of these terms, as functions of the input concentrations, will
realize identification, inversion, multiplication, or addition. The key point is that if the production ratio $F_1(t)/F_0(t)$ is held fixed for all positive time, and all downstream consumption enters symmetrically through the same depletion term in both equations, then the output ratio $s_1(t)/s_0(t)$ is held fixed at that same value.

\begin{lemma}[Single-module ratio invariance]
\label{lem:single_module}
Suppose that the concentrations $(s_0(t),s_1(t))^T$ of a computational pair
$(S_0,S_1)$ satisfy an ODE system of the form
\begin{equation}
\frac{d}{dt}
\begin{pmatrix}
    s_0(t) \\
    s_1(t)
\end{pmatrix}
=
\begin{pmatrix}
    F_0(t) \\
    F_1(t)
\end{pmatrix}
-
H(t)
\begin{pmatrix}
    s_0(t) \\
    s_1(t)
\end{pmatrix},
\label{eq:abstract_S}
\end{equation}
with initial conditions $s_0(0)=s_1(0)=0$.  Assume that $H,F_0,$ and $F_1$
are continuous functions of time, with $F_0(t),F_1(t)\ge 0$, and that the same
function $H(t)$ appears in both equations.

Suppose further that there exists a constant $\alpha\in[0,\infty]$ such that,
for all $t>0$,
\begin{equation}
\frac{F_1(t)}{F_0(t)}=\alpha,
\label{eq:F_ratio}
\end{equation}
where the ratio is interpreted using the extended-ratio convention.  Then, for all $t>0$,
\[
\frac{s_1(t)}{s_0(t)}=\alpha,
\]
again using the extended-ratio convention.
\end{lemma}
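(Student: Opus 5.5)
The equation \eqref{eq:abstract_S} is linear in $(s_0,s_1)$ with a common scalar coefficient $H(t)$. I will therefore solve it explicitly by variation of constants. Set
\[
\Phi(t)=\exp\Big(\int_0^t H(r)\,dr\Big)>0,
\]
which is well defined and $C^1$ because $H$ is continuous. Multiplying \eqref{eq:abstract_S} by $\Phi$ gives $\frac{d}{dt}(\Phi s_i)=\Phi F_i$ for $i=0,1$. With the zero initial conditions this yields
\[
s_i(t)=\frac{1}{\Phi(t)}\int_0^t \Phi(r)F_i(r)\,dr .
\]
The factor $1/\Phi(t)$ is the same in both components, so it cancels in the ratio. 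The claim therefore reduces to a statement about the weighted integrals $I_i(t)=\int_0^t\Phi F_i\,dr$: for every $t>0$ we must show $I_1(t)/I_0(t)=\alpha$ in the extended sense. This means that at least one of $I_0,I_1$ is positive, and that $I_1=\alpha I_0$ when $\alpha<\infty$, while $I_0=0<I_1$ when $\alpha=\infty$.

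Next I translate the hypothesis \eqref{eq:F_ratio} into pointwise relations for $r>0$, using the extended-ratio convention from Section~\ref{sec:computational_pair}. That convention requires at least one of $F_0(r),F_1(r)$ to be positive. If $\alpha\in[0,\infty)$, then $F_0(r)>0$ and $F_1(r)=\alpha F_0(r)$. If $\alpha=\infty$, then $F_0(r)=0$ and $F_1(r)>0$.

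Now fix $t>0$ and treat the two cases in turn.
\begin{enumerate}[(i)]
\item If $\alpha<\infty$, integrate $\Phi F_1=\alpha\Phi F_0$ over $(0,t)$ to get $I_1=\alpha I_0$. The integrand $\Phi F_0$ is continuous, nonnegative and strictly positive on $(0,t]$, so $I_0(t)>0$. Hence $s_0(t)>0$ and $s_1(t)/s_0(t)=\alpha$.
\item If $\alpha=\infty$, the same reasoning gives $I_0(t)=0$ and $I_1(t)>0$, so $s_0(t)=0<s_1(t)$. By the convention this is the value $+\infty$.
\end{enumerate}

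I expect the only delicate point to be the well-definedness of the ratio at every $t>0$, since $s_0(0)=s_1(0)=0$. The issue is excluding the degenerate case $s_0(t)=s_1(t)=0$. This is handled by the strict positivity of the relevant $F_i$ on $(0,t]$ combined with continuity, so the integral over an interval of positive length is positive; the case $r=0$ is irrelevant because it has measure zero. A secondary point is that uniqueness of solutions of the linear ODE with continuous coefficients justifies identifying $s$ with the explicit formula.
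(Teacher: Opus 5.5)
Your proof is correct and follows essentially the same route as the paper. Both solve the equation by variation of constants with the common integrating factor, and both use strict positivity of $F_0$ (or of $F_1$ when $\alpha=+\infty$) on $(0,t]$ to exclude the degenerate $(0,0)$ case; the only cosmetic difference is that the paper derives $s_1=\alpha s_0$ for $\alpha\in(0,\infty)$ from uniqueness for $\dot g=-Hg$ with $g=s_1-\alpha s_0$ and treats $\alpha\in\{0,+\infty\}$ separately, whereas you read every case directly off the explicit integral formula.
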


\begin{proof}
 First note that because the extended ratio in \eqref{eq:F_ratio} is well-defined for every $t>0$, the production pair $(F_0(t),F_1(t))$ is never $(0,0)$. Variation of constants gives
\[
s_i(t)=\int_0^t F_i(u)\exp\!\left(-\int_u^t H(r)\,dr\right)du,\qquad i=0,1.
\]
Hence $s_0(t)>0$ for every $t>0$ when $\alpha<+\infty$, while $s_1(t)>0$ for every $t>0$ when $\alpha=+\infty$.

First suppose $\alpha\in(0,\infty)$.  Define
\[
g(t):=s_1(t)-\alpha s_0(t).
\]
Using \eqref{eq:abstract_S} and \eqref{eq:F_ratio}, we obtain
\[
\dot g(t)=F_1(t)-\alpha F_0(t)-H(t)g(t)=-H(t)g(t).
\]
Since $g(0)=0$, uniqueness for this linear equation gives $g(t)\equiv 0$.
Therefore $s_1(t)=\alpha s_0(t)$ for all $t\ge 0$, and hence
\[
\frac{s_1(t)}{s_0(t)}=\alpha
\]
for all $t>0$.

If $\alpha=0$, then the extended-ratio condition in \eqref{eq:F_ratio} gives
$F_1(t)=0$ for all $t>0$.  Since $s_1(0)=0$, the second equation in
\eqref{eq:abstract_S} gives $s_1(t)\equiv 0$.  Thus the output ratio is $0$ for every $t>0$.

Similarly, if $\alpha=+\infty$, then the extended-ratio condition gives
$F_0(t)=0$ for all $t>0$.  Since $s_0(0)=0$, the first equation in
\eqref{eq:abstract_S} gives $s_0(t)\equiv 0$.  Thus the output ratio is
$+\infty$ for every $t>0$.
\end{proof}

The preceding lemma shows that a computational pair has the desired invariant
ratio whenever its production terms have a fixed ratio and all downstream
consumption enters symmetrically through a common depletion term. 
Thus, to apply
the lemma to the elementary modules in a feedforward network, we must verify two
structural properties:

\begin{enumerate}
\item The \textit{symmetric downstream consumption condition}: whenever a
computational pair $(S_0,S_1)$ is used as an input to any downstream module, the
two species are consumed in exactly the same proportion, so that all downstream
usage contributes through a single shared depletion term $H(t)$.

\item The \textit{ratio realization condition}:   the production terms for the output computational pair have a well-defined, constant extended ratio:
\[
\frac{F_1(t)}{F_0(t)}=\alpha,\qquad t>0,
\]
for some constant $\alpha \in [0,\infty]$.
\end{enumerate}

In Sections \ref{sec:cond1} and \ref{sec:cond2}, we now verify these two structural conditions for the four elementary modules
introduced in Section~\ref{sec:module_intro}.  
In doing so, we identify the
corresponding value of $\alpha$ in each case: $\alpha=a$ for identification,
$\alpha=1/a$ for inversion, $\alpha=ab$ for multiplication, and $\alpha=a+b$
for addition.

\subsubsection{Condition 1: Symmetric downstream consumption}
\label{sec:cond1}

We now verify the symmetric downstream consumption condition -- that a computational pair $(A_0,A_1)$ is symmetrically consumed when it is an input into a computational module. 
In other words, we will show that each module $m$ for which $(A_0,A_1)$ is an input contributes a term of the type 
\begin{equation}
    -h_m(t) \begin{pmatrix}
        a_0(t) \\ a_1(t)
    \end{pmatrix}. 
\end{equation}
The consumption/negative part in the complete differential equation for the pair $(A_0,A_1)$ is then obtained by
summing over all downstream modules that use $(A_0,A_1)$ as an input.  Let
\[
\mathcal M(A)
=
\{m:\text{ the computational pair }(A_0,A_1)\text{ is an input to }m\}.
\]
Then
\begin{align}
\frac{d}{dt}
\begin{pmatrix}
    a_0(t) \\ a_1(t)
\end{pmatrix}
&=
\begin{pmatrix}
    F_0(t) \\ F_1(t)
\end{pmatrix}
-
H_A(t)
\begin{pmatrix}
    a_0(t) \\ a_1(t)
\end{pmatrix},
\label{eq:complete_a}
\end{align}
where
\[
H_A(t)=\sum_{m\in\mathcal M(A)} h_m(t).
\]
Here $F_0(t)$ and $F_1(t)$ denote the production terms coming from the unique
module that produces $(A_0,A_1)$, if such a module exists; for a source pair,
these production terms are absent.

\paragraph{Identification and inversion ($a \mapsto x = a$ or $x = 1/a$).}
The relevant reactions are $A_0 \to \cdot$ and $A_1 \to \cdot$. This module contributes the following term to the ODE for $(a_0, a_1)^T$:
\begin{equation}
    -h_m(t) \begin{pmatrix}
        a_0(t) \\ a_1(t)
    \end{pmatrix} = -1 \begin{pmatrix}
        a_0(t) \\ a_1(t)
    \end{pmatrix},
\end{equation}
so that $h_m(t) = 1$.

\paragraph{Multiplication and addition ($(a,b) \mapsto x = ab$ or $x = a+b$).}
Assume that $(B_0, B_1)$ is the other computational pair that enters as a co-input with $(A_0, A_1)$. The relevant reactions are $A_0 + B_0 \to \cdot$, $A_1 + B_1 \to \cdot$, $A_0 + B_1 \to \cdot$, and $A_1 + B_0 \to \cdot$. This module contributes the following term to the ODE for $(a_0, a_1)^T$:
\begin{equation}
    -h_m(t) \begin{pmatrix}
        a_0(t) \\ a_1(t)
    \end{pmatrix} = -(b_0(t) + b_1(t)) \begin{pmatrix}
        a_0(t) \\ a_1(t)
    \end{pmatrix},
\end{equation}
so that $h_m(t) = b_0(t) + b_1(t)$.

In all cases, the consumption of $(A_0, A_1)$ by any module takes the form $-h_m(t)(a_0(t), a_1(t))^T$, confirming that Condition~1 holds for every module type.

\subsubsection{Condition 2: Ratio realization}
\label{sec:cond2}

 We now verify the ratio realization condition for each module. 
In this subsection, the pair $(X_0,X_1)$ denotes the output computational pair
of the module under consideration.  The term $H(t)$  is the symmetric downstream depletion term justified by Condition~1.  For the present verification, the
only point is to identify the production terms $F_0(t)$ and $F_1(t)$ and show
that their ratio is the intended arithmetic expression in the input ratios.

Throughout this subsection, the input values are understood to be admissible for the module under consideration.

Throughout this subsection, lowercase letters with subscripts denote
concentrations, while the corresponding letter without a subscript denotes the
encoded ratio. Thus, for an input pair $(A_0,A_1)$ we write
\[
a=\frac{a_1(t)}{a_0(t)},
\]
using the extended-ratio convention from \eqref{fractional_rep}. Similarly, for
a second input pair $(B_0,B_1)$, we write
\[
b=\frac{b_1(t)}{b_0(t)}.
\]

\paragraph{Identification ($a \mapsto x = a$).}
The module \eqref{net:identification} produces output dynamics 
\begin{equation} \label{net:identification_again}
\begin{aligned}
    \frac{d}{dt} \begin{pmatrix}
        x_0(t) \\ x_1(t) 
    \end{pmatrix}  = 
    \begin{pmatrix}
        a_0(t) \\ a_1(t) 
    \end{pmatrix} - H(t)
    \begin{pmatrix}
        x_0(t) \\ x_1(t)
    \end{pmatrix}, 
\end{aligned}
\end{equation}
where $H(t)$ is the symmetric downstream depletion term established in Condition~1. 
Assuming that $a = a_1(t)/a_0(t)$ for $t>0$, we have that $x_1(t)/x_0(t) = a$ for $t>0$ by Lemma \ref{lem:single_module}.

\paragraph{Inversion ($a \mapsto x = 1/a$).}
The module \eqref{net:inversion} produces output dynamics
\begin{equation} \label{net:inversion_again}
\begin{aligned}
    \frac{d}{dt} \begin{pmatrix}
        x_0(t) \\ x_1(t) 
    \end{pmatrix}  = 
    \begin{pmatrix}
        a_1(t) \\ a_0(t) 
    \end{pmatrix} - H(t)
    \begin{pmatrix}
        x_0(t) \\ x_1(t)
    \end{pmatrix}, 
\end{aligned}
\end{equation}
where $H(t)$  is the symmetric downstream depletion term established in Condition~1.
Assuming that $a = a_1(t)/a_0(t)$ for $t>0$, we have $F_0(t) = a_1(t)$ and $F_1(t) = a_0(t)$, so $F_1(t)/F_0(t) = a_0(t)/a_1(t) = 1/a$. 
Therefore $x_1(t)/x_0(t) = 1/a$ for $t>0$ by Lemma \ref{lem:single_module}.

\paragraph{Multiplication ($a, b \mapsto x = ab$).}
The module \eqref{net:multiplication} produces output dynamics
\begin{equation} \label{net:multiplication_again}
\begin{aligned}
    \frac{d}{dt} \begin{pmatrix}
        x_0(t) \\ x_1(t) 
    \end{pmatrix}  = 
    \begin{pmatrix}
        a_0(t) b_0(t) \\ a_1(t) b_1(t)
    \end{pmatrix} - H(t)
    \begin{pmatrix}
        x_0(t) \\ x_1(t)
    \end{pmatrix}
    =
    \begin{pmatrix} a_0(t) \\ a_1(t) \end{pmatrix} \otimes \begin{pmatrix} b_0(t) \\ b_1(t) \end{pmatrix} - H(t)
    \begin{pmatrix}
        x_0(t) \\ x_1(t)
    \end{pmatrix},
\end{aligned}
\end{equation}
where the map $\otimes$ is as defined in \eqref{def:otimes}, and $H(t)$  is the symmetric downstream depletion term established in Condition~1.
Assuming that $a = a_1(t)/a_0(t)$ and $b = b_1(t)/b_0(t)$ for $t>0$, we have $F_0(t) = a_0(t)b_0(t)$ and $F_1(t) = a_1(t)b_1(t)$, so 
\[
\frac{F_1(t)}{F_0(t)} = \frac{a_1(t) b_1(t)}{a_0(t) b_0(t)} = \frac{a_1(t)}{a_0(t)} \cdot \frac{b_1(t)}{b_0(t)} = ab.
\]
Therefore $x_1(t)/x_0(t) = ab$ for $t>0$ by Lemma \ref{lem:single_module}.

\paragraph{Addition ($a, b \mapsto x = a+b$).}
The module \eqref{net:addition} produces output dynamics
\begin{equation} \label{net:addition_again}
\begin{aligned}
    \frac{d}{dt} \begin{pmatrix}
        x_0(t) \\ x_1(t) 
    \end{pmatrix}  = 
    \begin{pmatrix}
        a_0(t) b_0(t) \\ a_0(t) b_1(t) + a_1(t) b_0(t)
    \end{pmatrix} - H(t)
    \begin{pmatrix}
        x_0(t) \\ x_1(t)
    \end{pmatrix}
    =
    \begin{pmatrix} a_0(t) \\ a_1(t) \end{pmatrix} \oplus \begin{pmatrix} b_0(t) \\ b_1(t) \end{pmatrix} - H(t)
    \begin{pmatrix}
        x_0(t) \\ x_1(t)
    \end{pmatrix},
\end{aligned}
\end{equation}
where the map $\oplus$ is as defined in \eqref{def:oplus}, and $H(t)$   is the symmetric downstream depletion term established in Condition~1.
Assuming that $a = a_1(t)/a_0(t)$ and $b = b_1(t)/b_0(t)$ for $t>0$, we have $F_0(t) = a_0(t)b_0(t)$ and $F_1(t) = a_0(t)b_1(t) + a_1(t)b_0(t)$, so
\[
\frac{F_1(t)}{F_0(t)} = \frac{a_0(t) b_1(t) + a_1(t) b_0(t)}{a_0(t) b_0(t)} = \frac{b_1(t)}{b_0(t)} + \frac{a_1(t)}{a_0(t)} = b + a.
\]
Therefore $x_1(t)/x_0(t) = a + b$ for $t>0$ by Lemma \ref{lem:single_module}.

\begin{remark}\label{rem:add_infty}
The case $a = b = +\infty$ is a notable exception: although $\infty + \infty = +\infty$ in the extended reals, the encoding gives $a_0 = b_0 = 0$, so $F_0 = F_1 = 0$ and the output is undefined. Accordingly, this case is excluded by admissibility.
\end{remark}

\subsection{Feedforward compositionality theorem}

The preceding conditions are designed precisely so that correctness propagates
through a feedforward computational graph.  The following theorem is the main theoretical result of the paper: once the source values are encoded, every downstream computational pair represents its recursively defined value at every positive time for which the network solution is defined. The theorem is independent of the particular arithmetic modules used; only Conditions~1 and~2 matter.

\begin{theorem}[Feedforward compositionality]
\label{thm:feedforward}
Let $\GG=(V,E)$ be a computational graph as defined in Definition~\ref{def:computational_graph}. Suppose:
\begin{itemize}
\item Each source vertex $v$ is associated with a computational pair $(S_{v,0}, S_{v,1})$ whose initial concentrations $s_{v,0}(0)$ and $s_{v,1}(0)$ are not both zero.  The values encoded by the source pairs form an admissible input assignment for $\GG$.

\item 
 Each non-source vertex is associated with exactly one computational module, whose input pairs are the computational pairs of its immediate predecessors in $\GG$, and every module in the graph satisfies Conditions~1 and~2.
 
\item The computational pair at each non-source vertex $v \in V$ satisfies $s_{v,0}(0) = s_{v,1}(0) = 0$.
\end{itemize}
 Then for every vertex $v\in V$, the ratio $s_{v,1}(t)/s_{v,0}(t)$ is well-defined and constant for every $t>0$ for which the network solution is defined, and equals the value obtained by recursively applying the operations represented by the modules to the source vertex ratios.
\end{theorem}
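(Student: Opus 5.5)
The plan is to argue by induction along a topological ordering of $\GG$, using Lemma~\ref{lem:single_module} at each non-source vertex. Fix a solution of the full mass-action system on its maximal interval of existence $[0,T)$. All module reactions have mass-action form, so the right-hand side is polynomial. Hence the solution is unique and continuous, and it stays in $\R_{\ge 0}$ because the network is a chemical reaction network. Thus every coefficient function appearing below, such as $h_m(t)=b_0(t)+b_1(t)$ or $F_0(t)=a_0(t)b_0(t)$, is continuous and nonnegative on $[0,T)$. By Condition~1, summing over all downstream modules that use a given pair $(S_{v,0},S_{v,1})$ puts the ODE for that pair in the form \eqref{eq:complete_a}. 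The depletion term is $H_v(t)=\sum_{m\in\mathcal M(v)}h_m(t)$, which is continuous and shared by both components. The production terms $(F_{v,0},F_{v,1})$ come only from the unique module $M_v$ at $v$, and are absent if $v$ is a source.

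\emph{Base case (source vertices).} Here the equation is $\dot s_{v,i}=-H_v(t)s_{v,i}$. This integrates to $s_{v,i}(t)=s_{v,i}(0)\exp(-\int_0^tH_v)$. The ratio $s_{v,1}(t)/s_{v,0}(t)$ therefore equals $s_{v,1}(0)/s_{v,0}(0)$ for all $t\ge0$, under the extended-ratio convention. The pair is never $(0,0)$, since the initial data are not both zero and the exponential factor is positive. Zero components stay zero and positive ones stay positive, so the cases with value $0$ and $+\infty$ are preserved as well.

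\emph{Inductive step.} Take a non-source vertex $v$ and assume every predecessor $u$ has a well-defined ratio on $(0,T)$ equal to its recursively defined value $\mathrm{val}(u)$. Condition~2 then gives constant production ratios $F_{v,1}(t)/F_{v,0}(t)=\alpha_v$ for $t>0$, with $\alpha_v$ equal to the module's operation applied to the $\mathrm{val}(u)$. The earlier verifications show $\alpha=a,1/a,ab,a+b$ for the four modules. Admissibility is exactly what guarantees this extended ratio is well-defined, so that $(F_{v,0},F_{v,1})\ne(0,0)$; Remark~\ref{rem:add_infty} illustrates this. Since $s_{v,0}(0)=s_{v,1}(0)=0$, Lemma~\ref{lem:single_module} applies on $(0,T)$ and yields $s_{v,1}(t)/s_{v,0}(t)=\alpha_v=\mathrm{val}(v)$ for all $t\in(0,T)$. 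The earlier lemma on unique value assignment makes $\mathrm{val}$ independent of the ordering, which closes the induction.

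\emph{Main obstacle.} The delicate point is the apparent circularity. $H_v$ depends on downstream concentrations that are not yet known to have the right ratio, while $F_v$ depends on upstream ones. I would resolve it by never solving anything inductively. Instead, the actual global solution is fixed first, and each $H_v$ and $F_v$ is treated as a known continuous function of $t$. Lemma~\ref{lem:single_module} needs only continuity of $H$ and the shared-depletion structure, not any property of $H$'s ratio. So the induction uses information from predecessors only, and the argument is not circular.

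A minor second point concerns the extended ratio at each $t>0$. The predecessor ratios must be well-defined at every positive time, not merely almost everywhere, for Condition~2 to hold pointwise. This is exactly what the inductive hypothesis provides.
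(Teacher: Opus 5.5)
Your proposal is correct and follows essentially the same argument as the paper: induction along a topological ordering, with source vertices handled by integrating $\dot s_{v,i} = -H_v(t)\, s_{v,i}$ explicitly and non-source vertices handled by combining Conditions~1 and~2 (with admissibility ruling out $(F_{v,0},F_{v,1})=(0,0)$) and then applying Lemma~\ref{lem:single_module}. Your remark that the global solution is fixed first, so that the downstream-dependent $H_v$ is simply a known continuous function of $t$, spells out a point the paper's proof leaves implicit.
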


\begin{proof}
Since $\GG$ is acyclic, its vertices admit a topological ordering $v_1,\dots,v_N$ in which all source vertices appear first. The claim holds for source vertices: a source pair has no production terms, so by
Condition~1 and \eqref{eq:complete_a} its concentrations satisfy $\frac{d}{dt}(s_{v,0}, s_{v,1})^T
= -H_{v}(t)\,(s_{v,0}, s_{v,1})^T$, and so $s_{v,i}(t) = s_{v,i}(0)\,e^{-\int_0^t H_v(u)\,du}$ for
$i = 0,1$. The common positive factor $e^{-\int_0^t H_v}$ cancels in the ratio, so, since
$s_{v,0}(0)$ and $s_{v,1}(0)$ are not both zero, the ratio $s_{v,1}(t)/s_{v,0}(t)$ is well-defined
and equal to $s_{v,1}(0)/s_{v,0}(0)$ for all $t \ge 0$ for which the solution is defined.

Suppose the claim holds for $v_1,\dots,v_{k-1}$, and consider a non-source vertex $v_k$. By the inductive hypothesis, the input ratios to the module at $v_k$ are well-defined, constant for $t>0$, and equal to the values recursively determined by the source ratios at the immediate predecessors of $v_k$. 
Denote the value of the operation represented by this module, applied to these input ratios, by $\alpha_{v_k}$. By admissibility, this value is defined.

 By Condition~2 (Section~\ref{sec:cond2}), the production terms for the output pair at $v_k$ have the well-defined extended ratio $F_{v_k,1}(t)/F_{v_k,0}(t)=\alpha_{v_k}$ for every $t>0$. By Condition~1 (Section~\ref{sec:cond1}), every downstream use of the output pair contributes a term of the form $-h_m(t)(s_{v_k,0}, s_{v_k,1})^T$ to its ODE, so the complete ODE for the output pair has the form \eqref{eq:complete_a}, i.e.\
\[
\frac{d}{dt}\begin{pmatrix} s_{v_k,0}(t) \\ s_{v_k,1}(t) \end{pmatrix}
= \begin{pmatrix} F_{v_k,0}(t) \\ F_{v_k,1}(t) \end{pmatrix}
- H_{v_k}(t) \begin{pmatrix} s_{v_k,0}(t) \\ s_{v_k,1}(t) \end{pmatrix},
\]
where $H_{v_k}(t) = \sum_m h_m(t)$. Since $s_{v_k,0}(0) = s_{v_k,1}(0) = 0$, Lemma~\ref{lem:single_module} gives $s_{v_k,1}(t)/s_{v_k,0}(t) = \alpha_{v_k}$ for all $t>0$ for which the solution is defined. The result follows by induction.
\end{proof}

\begin{corollary}[Composability of the standard arithmetic modules]
\label{cor:standard_modules}
Every admissible feedforward computational graph built from the identification, inversion, multiplication, and addition modules of Section~\ref{sec:module_intro},  with every non-source computational pair
initialized at $(0,0)^T$, computes its recursively defined arithmetic values instantaneously: at every vertex, the encoded ratio is correct for all $t>0$.
\end{corollary}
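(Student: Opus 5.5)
The corollary should follow by checking the hypotheses of Theorem~\ref{thm:feedforward} for the four standard modules and then invoking the theorem.

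First, the source hypothesis. Under the standard encoding a finite source value $a$ is loaded as $(1,a)$ and $a=+\infty$ as $(0,1)$, so the initial source pair is never $(0,0)$. Admissibility is assumed in the statement. The non-source pairs start at $(0,0)^T$ by assumption, which is exactly the third hypothesis.

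Second, I will confirm that every module in the graph satisfies Conditions~1 and~2. Condition~1 is the computation in Section~\ref{sec:cond1}, which gives $h_m=1$ for identification and inversion and $h_m=b_0+b_1$ for multiplication and addition. It does not depend on the input values, and it holds whether a pair feeds several modules or feeds the same module through both input slots. In the latter case, for example squaring $A\otimes A$ via reactions $A_0+A_0\to X_0$ and so on, I would remark that the consumption is still proportional to $(a_0,a_1)^T$, namely $-(a_0+a_1)(a_0,a_1)^T$ up to stoichiometric constants that are common to both components. Condition~2 is the verification in Section~\ref{sec:cond2}, which gives $\alpha=a$, $1/a$, $ab$ and $a+b$.

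The main obstacle is that Section~\ref{sec:cond2} checked the ratios only loosely at the boundary values $0$ and $+\infty$. I will go through the extended-ratio cases explicitly, using admissibility to exclude $0\cdot\infty$ and $\infty+\infty$.
\begin{itemize}
\item For inversion, $F=(a_1,a_0)$ is never $(0,0)$.
\item For multiplication with $a=0$ and $b<\infty$, we have $F_1=0$ and $F_0=a_0b_0>0$, since $a_0,b_0>0$ for $t>0$ by the positivity established in the proof of Lemma~\ref{lem:single_module}.
\item For multiplication with $a=\infty$ and $b>0$, we have $F_0=0$ and $F_1>0$.
\item For addition with exactly one input infinite, say $a_0=0$, we have $F_0=0$ and $F_1=a_1b_0>0$.
\end{itemize}
This confirms that the ratio is well defined and equal to $\alpha$ for all $t>0$.

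Finally, applying Theorem~\ref{thm:feedforward} yields correctness at every vertex for all $t>0$ on the interval of existence. The solution exists globally because all species are nonnegative and every reaction is either conversion or consumption, so total mass is nonincreasing and there is no blow-up. This gives correctness for all $t>0$.
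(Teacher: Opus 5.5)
Your proof is correct and follows the paper's route. The paper's proof is exactly ``Conditions~1 and~2 hold by Sections~\ref{sec:cond1}--\ref{sec:cond2}, the solution is global by Proposition~\ref{prop:global}, so apply Theorem~\ref{thm:feedforward}.''

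The one ingredient you argue differently is global existence. The paper's Proposition~\ref{prop:global} works inductively along a topological order: it bounds each output pair by the bounds on its predecessors over a putative finite maximal interval. It is written that way so it also covers the non-consuming modules of Section~\ref{sec:alt_modules}, where molecules can be created. Your observation is different: every standard reaction has at least as many reactant molecules as product molecules. So the total concentration is nonincreasing and, with nonnegativity, bounds every species by the initial total. This is shorter and even gives a time-uniform bound. However, it holds only for the four standard modules; it fails, for instance, for $A_0+B_0\to X_0+A_0+B_0$.

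Your explicit boundary-case check of Condition~2 tightens Section~\ref{sec:cond2}, whose identity $\frac{a_1b_1}{a_0b_0}=\frac{a_1}{a_0}\cdot\frac{b_1}{b_0}$ is only formal when some component vanishes. The positivity you need follows directly from the inductive hypothesis, without going through the proof of Lemma~\ref{lem:single_module}. A well-defined extended ratio $a<\infty$ already means $a_0(t)>0$, and $a>0$ means $a_1(t)>0$.

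Your squaring remark is unnecessary. In Definition~\ref{def:computational_graph} a binary module's inputs are two distinct predecessor pairs, and the paper uses an identification copy for squaring instead. The remark is also fragile: its symmetry holds only if both cross reactions $A_0+A_1\to 0$ are kept. With a single such reaction at rate constant one, $\dot a_0=-a_0(2a_0+a_1)$ and $\dot a_1=-a_1(2a_1+a_0)$, which is not of the form $-h(t)(a_0,a_1)^T$.
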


\begin{proof}
Sections~\ref{sec:cond1} and~\ref{sec:cond2} verify Conditions~1 and~2 for these four modules. Proposition~\ref{prop:global} shows that the resulting mass-action solution is defined for all $t\ge0$. The conclusion therefore follows from Theorem~\ref{thm:feedforward}.
\end{proof}

Theorem~\ref{thm:feedforward} isolates the general composability principle: instantaneous ratio correctness is preserved by any feedforward collection of modules satisfying Conditions~1 and~2. Corollary~\ref{cor:standard_modules} applies this principle to the four standard arithmetic modules, for which global existence is guaranteed by Proposition~\ref{prop:global} of Appendix~\ref{app:global}.

\section{Signed dual rail representation for all real numbers}
\label{sec:real}

The ratio encoding of Sections~\ref{sec:fundamental} and \ref{sec:composability} represents elements of $[0,+\infty]$. To represent all real numbers, including negative ones, we use a \emph{signed dual rail} encoding: a quadruple of species $(A_{p0}, A_{p1}, A_{n0}, A_{n1})$, equivalently two computational pairs, representing a real number via
\[
a \coloneqq \frac{a_{p1}}{a_{p0}} - \frac{a_{n1}}{a_{n0}} \in [-\infty, \infty],  
\]
where, for example, $a_{p1}$ denotes the concentration of species $A_{p1}$. This approach is used in \cite{chen2014deterministic,chen2023rate,anderson2025arithmetic}. The expressions $0/0$ and $\infty - \infty$ are not defined, and quadruples giving rise to them are excluded: specifically, a quadruple with both $a_{p0}$ and $a_{p1}$ equal to zero, or both $a_{n0}$ and $a_{n1}$ equal to zero, or both $a_{p0}$ and $a_{n0}$ equal to zero is left undefined. All other quadruples have unambiguous meaning; for instance, if $a_{n0} = 0$ with $a_{n1} \ne 0$ and $a_{p0} \ne 0$, then $a = -\infty$. 

Since this representation of a real number is not unique, we introduce a
canonical signed dual rail encoding.

\begin{definition}[Canonical signed real]
A signed dual rail encoding $a = (a_p, a_n)$ with $a_p, a_n \in [0,+\infty]$ is \emph{canonical} if $a_p = 0$ or $a_n = 0$ (or both). In canonical form, every element of $[-\infty,+\infty]$ has a  unique pair of rail values: positive $a$ is encoded as $(a, 0)$, negative $a$ as $(0,-a)$, zero as $(0,0)$, $+\infty$ as $(+\infty, 0)$, and $-\infty$ as $(0, +\infty)$.
\end{definition}

 The uniqueness here is at the level of the rail values $(a_p,a_n)$, not at the level of the four concentrations: each individual rail value retains the nonuniqueness of the underlying ratio encoding.

It is natural to ask whether starting from canonical inputs the outputs of the real modules are also canonical. As we show below, multiplication preserves canonical form but addition does not in general. 

\subsection{Real addition module}

In order to define addition of two extended real numbers $a,b \in [-\infty, \infty]$, we note that 
\begin{align*}
    a + b &= \left(\frac{a_{p1}}{a_{p0}} - \frac{a_{n1}}{a_{n0}}\right) + \left(\frac{b_{p1}}{b_{p0}} - \frac{b_{n1}}{b_{n0}}\right) \\
    &= \left(\frac{a_{p1}}{a_{p0}} + \frac{b_{p1}}{b_{p0}} \right) - \left(\frac{a_{n1}}{a_{n0}} + \frac{b_{n1}}{b_{n0}}\right).
\end{align*}
The positive and negative output rails are therefore
\[
\begin{pmatrix} a_{p0} \\ a_{p1} \end{pmatrix} \oplus \begin{pmatrix} b_{p0} \\ b_{p1} \end{pmatrix}
\quad \text{and} \quad
\begin{pmatrix} a_{n0} \\ a_{n1} \end{pmatrix} \oplus \begin{pmatrix} b_{n0} \\ b_{n1} \end{pmatrix},
\]
respectively. In other words, real addition is simply ordinary addition on both positive and negative rails independently.

\begin{remark}\label{rem:add_canonical}
Real addition inherits the $\infty + \infty$ limitation of the non-negative module (Remark~\ref{rem:add_infty}): the cases $a = b = +\infty$ and $a = b = -\infty$ leave the respective output rails undefined,  while $a=+\infty$ and $b=-\infty$ correspond to the undefined extended-real sum $+\infty+(-\infty)$: both output rails encode $+\infty$, producing the undefined signed difference $+\infty-\infty$. Additionally, canonical form is not preserved in general: when $a$ and $b$ have opposite signs, both output rails may be nonzero (e.g.\ $a = 3$, $b = -2$ gives output rails $(3, 2)$), and restoring canonical form requires a subtraction step outside the computational design of this paper.
\end{remark}

\subsection{Real multiplication module}

To define multiplication of two extended real numbers $a,b \in [-\infty, \infty]$, note that 
   \begin{align*}
    a \cdot b &= \left(\frac{a_{p1}}{a_{p0}} - \frac{a_{n1}}{a_{n0}}\right) \cdot \left(\frac{b_{p1}}{b_{p0}} - \frac{b_{n1}}{b_{n0}}\right) \\
    &= \left(\frac{a_{p1}b_{p1}}{a_{p0}b_{p0}} + \frac{a_{n1}b_{n1}}{a_{n0}b_{n0}} \right) - \left(\frac{a_{p1}b_{n1}}{a_{p0}b_{n0}} + \frac{a_{n1}b_{p1}}{a_{n0}b_{p0}} \right).
\end{align*}
The positive and negative output rails are therefore
\[
\left(\begin{pmatrix} a_{p0} \\ a_{p1} \end{pmatrix} \otimes \begin{pmatrix} b_{p0} \\ b_{p1} \end{pmatrix}\right) \oplus \left(\begin{pmatrix} a_{n0} \\ a_{n1} \end{pmatrix} \otimes \begin{pmatrix} b_{n0} \\ b_{n1} \end{pmatrix}\right)
\quad \text{and} \quad
\left(\begin{pmatrix} a_{p0} \\ a_{p1} \end{pmatrix} \otimes \begin{pmatrix} b_{n0} \\ b_{n1} \end{pmatrix}\right) \oplus \left(\begin{pmatrix} a_{n0} \\ a_{n1} \end{pmatrix} \otimes \begin{pmatrix} b_{p0} \\ b_{p1} \end{pmatrix}\right),
\]
respectively. A real multiplication is thus composed of four non-negative multiplications and two non-negative additions.

\begin{remark}\label{rem:mult_canonical}
For well-defined signed dual rail inputs, real multiplication introduces no new undefined cases beyond those of the non-negative modules. Moreover, if both inputs are in canonical form, the output is also canonical: since at most one of $\{a_p, a_n\}$ and one of $\{b_p, b_n\}$ is nonzero, the positive and negative output rails cannot both be nonzero simultaneously.
\end{remark}

\subsection{Real identification module}

Real identification is straightforward and does not require any new ideas. Since the positive and negative rails are independent computational pairs, it suffices to apply the non-negative identification module \eqref{net:identification} to each rail in parallel:
\begin{equation}
\begin{aligned}
    A_{p0} \to X_{p0}, \quad A_{p1} \to X_{p1}, \quad
    A_{n0} \to X_{n0}, \quad A_{n1} \to X_{n1}.
\end{aligned}
\end{equation}

\subsection{Real inversion module}

Real inversion --- computing $1/a$ for $a \in [-\infty,\infty]$ --- does not decompose cleanly over the signed dual rail encoding. Writing $a = a_p - a_n$ where $a_p = a_{p1}/a_{p0}$ and $a_n = a_{n1}/a_{n0}$, the reciprocal $1/(a_p - a_n)$ has no simple expression in terms of $1/a_p$ and $1/a_n$ alone. We therefore leave a general real inversion module as an open problem for future work. We note that real inversion is not required for the power series and matrix multiplication applications developed in Section~\ref{sec:power_series} onwards, which use only addition and multiplication.

The notion of admissibility extends naturally to signed computations:
a signed computational graph is admissible if each signed module is defined
on the values it receives.

\begin{corollary}[Signed feedforward composability]
\label{cor:signed_feedforward}
Let $\GG$ be a finite admissible feedforward computational graph whose source
values lie in $[-\infty,\infty]$ and whose non-source vertices use real
identification, addition, or multiplication as defined above. Then $\GG$ can
be realized by an admissible feedforward graph of the nonnegative modules of
Section~\ref{sec:fundamental}, with every non-source computational pair
initialized at $(0,0)^T$. Consequently, every signed dual rail output
represents its recursively defined value for all $t>0$.
\end{corollary}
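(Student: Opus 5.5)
The plan is to build the realization explicitly, replacing each signed vertex by a small gadget of nonnegative modules, and then invoke Corollary~\ref{cor:standard_modules} (equivalently Theorem~\ref{thm:feedforward} together with Proposition~\ref{prop:global}). Concretely, I replace every vertex $v$ of $\GG$ by two computational pairs $P_v=(V_{p0},V_{p1})$ and $N_v=(V_{n0},V_{n1})$. A signed source value $a$ is loaded by encoding its two rail values $a_p,a_n\in[0,+\infty]$ into the source pairs $P_v,N_v$ with the standard encoding. Any decomposition works for the proof; the canonical one is natural.

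The internal vertices are handled by operation type.
\begin{itemize}
\item A real identification vertex becomes two nonnegative identification vertices, $P_u\to P_v$ and $N_u\to N_v$.
\item A real addition vertex with predecessors $u,w$ becomes two nonnegative addition vertices, $P_v=P_u\oplus P_w$ and $N_v=N_u\oplus N_w$.
\item A real multiplication vertex introduces four auxiliary pairs, $M_{pp}=P_u\otimes P_w$, $M_{nn}=N_u\otimes N_w$, $M_{pn}=P_u\otimes N_w$ and $M_{np}=N_u\otimes P_w$. It then sets $P_v=M_{pp}\oplus M_{nn}$ and $N_v=M_{pn}\oplus M_{np}$, exactly as in the displayed decomposition.
\end{itemize}
Every new pair, including the auxiliary ones, is initialized at $(0,0)^T$.

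Next I check that the result is a computational graph in the sense of Definition~\ref{def:computational_graph}. Each new pair has a unique producing module, and its in-edges come from pairs attached to predecessors of $v$ in $\GG$ or from auxiliaries created at $v$. A topological order of $\GG$ therefore lifts to one of the new graph, so acyclicity is preserved. A pair used by several downstream gadgets is simply an input to several modules, which Condition~1 accommodates by summing the $h_m$.

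The main obstacle is admissibility of the expanded graph: every nonnegative module must receive values on which it is defined. For identification and addition rails this is immediate from Remark~\ref{rem:add_canonical}. The defined signed sums exclude $+\infty+\infty$ on either rail, and a signed value with $a_p=\infty$ and $b_p=\infty$ would be $+\infty+(+\infty)$ or an indeterminate form, both excluded by signed admissibility. For multiplication, I will use Remark~\ref{rem:mult_canonical}, which says no new undefined cases arise for well-defined inputs.

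To justify that remark I will case-check the $0\cdot\infty$ products and the $\infty+\infty$ sums in the rail formulas. An infinite rail forces the signed value to be infinite, so an undefined product $0\cdot\infty$ on a rail corresponds to the excluded signed case $0\cdot(\pm\infty)$. Here I would restrict to canonical sources, or else argue that signed admissibility is being read at the rail level. Likewise, $\infty+\infty$ in one output rail corresponds to a product like $(+\infty)(+\infty)$ coexisting with $(-\infty)(-\infty)$, which is impossible for well-defined inputs. If this case analysis fails for noncanonical representations, I would instead define signed admissibility to mean admissibility of the expanded rail graph, which makes the claim tautological at this step.

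With admissibility in hand, Corollary~\ref{cor:standard_modules} gives that each pair $P_v,N_v$ encodes its recursively defined rail value for all $t>0$. By the algebraic identities displayed for real addition and multiplication, and trivially for identification, an induction over the topological order of $\GG$ shows that $p_{v1}/p_{v0}-n_{v1}/n_{v0}$ equals the signed recursive value at $v$. This proves the claim for all $t>0$.
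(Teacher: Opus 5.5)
Your construction is exactly the paper's proof, written out in more detail: two pairs per signed vertex, two rail additions for real addition, four rail multiplications feeding two rail additions for real multiplication, every non-source pair at $(0,0)^T$, then Corollary~\ref{cor:standard_modules} and a difference of rails. Your graph-theoretic checks are also fine: unique producers, lifting the topological order, and multiple downstream uses absorbed by summing the $h_m$. The part that does not work is the case analysis you propose for admissibility of the multiplication rails.

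\textbf{The rail product $0\cdot\infty$.} It does not correspond to a signed $0\cdot(\pm\infty)$, because the zero factor is a rail, not the signed value. Take canonical $a=3$ with rails $(a_p,a_n)=(3,0)$ and $b=+\infty$ with rails $(+\infty,0)$, both standardly encoded. Then $(a_{n0},a_{n1})=(1,0)$ and $(b_{p0},b_{p1})=(0,1)$. So $M_{np}=N_u\otimes P_w$ has production terms $a_{n0}b_{p0}=a_{n1}b_{p1}=0$ for all $t$. Hence $M_{np}$, and then $N_v=M_{pn}\oplus M_{np}$, stay at $(0,0)^T$, even though $3\cdot(+\infty)$ is a legitimate extended-real product.

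Restricting to canonical sources therefore makes things worse, since every canonical value has a zero rail. In fact every signed product with a factor $\pm\infty$ fails in this framework. Suppose $b_p=\infty$. Each of $a_p,a_n$ is either $0$, which gives a $0\cdot\infty$ rail product, or positive. If both are positive, both output rails equal $+\infty$, and the output is $\infty-\infty$.

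\textbf{The rail sum $\infty+\infty$.} It is not impossible for well-defined inputs. Take $a=+\infty$ with rails $(+\infty,1)$ and $b=-\infty$ with rails $(1,+\infty)$. These arise from canonical sources as $(+\infty)+(-1)$ and $(-\infty)+1$. They give $a_pb_p+a_nb_n=\infty+\infty$ on the positive output rail.

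\textbf{What to do instead.} Your fallback is not optional, and you should drop any attempt to derive rail admissibility from value-level exclusions. Signed admissibility must be read as the paper reads it: each signed module is defined on the rail values it receives, so every constituent rail operation is defined. This is precisely the step ``By admissibility, every constituent rail operation is defined'' in the paper's proof.

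Rail-graph admissibility alone is not enough for your final step, however. For example, $(+\infty)+(-\infty)$ has defined rails but output $\infty-\infty$. So you must also keep the signed-level requirement that each module's output is a well-defined signed value.

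With both requirements in place, only finite factors survive multiplication, where the rails are finite and the identity is ordinary algebra. Only sums with at most one infinite summand survive addition, where the identity is immediate. Your induction on the difference of rails then goes through as written.
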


\begin{proof}
Replace each real identification, addition, and multiplication vertex by the
corresponding collection of nonnegative modules described above, and initialize
every non-source computational pair in the resulting graph at $(0,0)^T$. By
admissibility, every constituent rail operation is defined, so the resulting
nonnegative feedforward graph is admissible. Corollary~\ref{cor:standard_modules}
therefore gives the correct ratio on every positive and negative rail for all
$t>0$. Taking the difference of the two rail values gives the claimed signed
value.
\end{proof}

\section{Applications: power series and matrix computations}
\label{sec:power_series}

\begin{figure}[h!]
\centering
\begin{tikzpicture}[
    varnode/.style={circle, draw=blue!60, fill=blue!5, thick, minimum size=6mm, font=\small},
    opnode/.style={rectangle, draw=red!60, fill=red!5, thick, minimum size=6mm, font=\small},
    finalnode/.style={ellipse, draw=green!60, fill=green!5, thick, minimum size=6mm, font=\small},
    arrow/.style={-Stealth, thick}
]

\node[varnode] (a0) {$c_0$};
\node[varnode, right=1.8cm of a0] (a1) {$c_1$};
\node[varnode, right=1.8cm of a1] (x) {$x$};
\node[varnode, right=1.8cm of x] (a2) {$c_2$};
\node[varnode, right=1.8cm of a2] (a3) {$c_3$};

\node[opnode, right=0.9cm of x, yshift=-0.9cm] (x-copy) {$x$};

\node[opnode, below=1.8cm of x, xshift=0.45cm] (x2) {$x^2$};

\node[opnode, below=1.8cm of x2] (x3) {$x^3$};

\node[opnode, below=1.8cm of a1, xshift=0.9cm] (a1x) {$c_1 x$};
\node[opnode, below=3.6cm of a2] (a2x2) {$c_2 x^2$};
\node[opnode, below=5.4cm of a3] (a3x3) {$c_3 x^3$};

\node[opnode, below=3.6cm of a0] (sum1) {$c_0 + c_1 x$};
\node[opnode, below=5.4cm of x] (sum2) {$c_0 + c_1 x + c_2 x^2$};
\node[finalnode, below=7.2cm of x] (result) {$c_0 + c_1 x + c_2 x^2 + c_3 x^3$};

\draw[arrow] (x) -- (x-copy);

\draw[arrow] (x) -- (x2);
\draw[arrow] (x-copy) -- (x2);

\draw[arrow] (x2) -- (x3);
\draw[arrow] (x) to[out=-90, in=120] (x3);

\draw[arrow] (a1) -- (a1x);
\draw[arrow] (x) -- (a1x);

\draw[arrow] (a2) -- (a2x2);
\draw[arrow] (x2) -- (a2x2);

\draw[arrow] (a3) -- (a3x3);
\draw[arrow] (x3) -- (a3x3);

\draw[arrow] (a0) -- (sum1);
\draw[arrow] (a1x) -- (sum1);

\draw[arrow] (sum1) -- (sum2);
\draw[arrow] (a2x2) -- (sum2);

\draw[arrow] (sum2) -- (result);
\draw[arrow] (a3x3) -- (result);

\end{tikzpicture}
\caption{Computational graph for computing the truncated power series $c_0 + c_1x + c_2x^2 + c_3x^3$. The inputs are $x$ and coefficients $c_0, c_1, c_2, c_3$. The computation uses identification, multiplication, and addition. Note that identification is a convenient way to produce a copy of $x$ in order to compute $x^2$.}
\label{fig:power_series}
\end{figure}

We record three representative applications: truncated power series, matrix multiplication, and determinants. Each is a finite feedforward computation using only identification, addition, and multiplication. For real-valued inputs, instantaneous correctness therefore follows directly from Corollary~\ref{cor:signed_feedforward}; for nonnegative inputs this reduces to Corollary~\ref{cor:standard_modules}.

Truncated power series give a simple first application. Since a polynomial can
be evaluated using only identification, multiplication, and addition, its
computational graph can be implemented directly by the modules constructed
above. Figure~\ref{fig:power_series} illustrates the case of a cubic polynomial. Note that a pair may serve as an input to any number of downstream modules, since Condition~1 simply sums the resulting depletion terms. If a single module is to take the same pair as both of its inputs, as when squaring a value, an identification module provides an easy way to supply the required copy.

\begin{corollary}[Instantaneous power series computation]
 Let $x,c_0,\ldots,c_n\in\R$ be encoded in signed dual rail form. Then the truncated power series $p(x)=\sum_{k=0}^{n}c_kx^k$ can be computed instantaneously by a reaction network implementing a feedforward computational graph built from real identification, multiplication, and addition modules. By Corollary~\ref{cor:signed_feedforward}, the signed output represents $p(x)$ for all $t>0$.
\end{corollary}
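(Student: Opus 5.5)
The plan is to build an explicit signed computational graph for $p(x)$ that uses only real identification, real multiplication and real addition. We then check that this graph is admissible for finite real inputs and apply Corollary~\ref{cor:signed_feedforward}.

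\textbf{Construction.} Take the source vertices $x,c_0,\dots,c_n$. Each is encoded in signed dual rail form, canonically with the standard encoding on each rail, so every source rail has finite value with $a_{p0}(0)=a_{n0}(0)=1$. The powers are built sequentially as in Figure~\ref{fig:power_series}.
\begin{itemize}
\item First create a copy $x'$ of $x$ with a real identification vertex.
\item Set $x^2=x\cdot x'$ using real multiplication.
\item For $k\ge 3$, set $x^k=x^{k-1}\cdot x$.
\end{itemize}
Every multiplication then has two distinct input vertices. Next form the monomials $c_kx^k$ by real multiplication for $k\ge1$. Finally accumulate the sum with a chain of real additions: $s_1=c_0+c_1x$ and $s_k=s_{k-1}+c_kx^k$. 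The cases $n=0$ (a single identification of $c_0$) and $n=1$ are handled trivially. A source vertex may feed several modules, because Condition~1 simply sums the depletion terms. The resulting directed graph is acyclic, since every edge goes from a lower to a higher index in the evident ordering. It also uses $O(n)$ modules.

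\textbf{Admissibility.} This is the only step needing care, and we expect it to be the main obstacle, although it should be mild. By Remarks~\ref{rem:add_infty}, \ref{rem:add_canonical} and~\ref{rem:mult_canonical}, the undefined cases involve infinite rail values: $\infty+\infty$ on a rail, $0\cdot\infty$, and $+\infty-\infty$. We argue by induction along the topological order that every rail value at every vertex is a finite nonnegative real. At the sources this holds because $x,c_k\in\R$. Nonnegative multiplication and addition of finite rail values are defined and return finite values. The real operations are rail-wise compositions of these, so finiteness propagates.

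Hence each signed module receives values on which it is defined, and the signed graph is admissible. Canonical form may be lost at the additions, but that is irrelevant, since only the signed difference matters.

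\textbf{Conclusion.} Corollary~\ref{cor:signed_feedforward} applies. The graph is realized by an admissible nonnegative feedforward network with all non-source pairs initialized at $(0,0)^T$, and every signed output represents its recursively defined value for all $t>0$. By construction, the recursively defined value at the final vertex is $\sum_{k=0}^n c_kx^k=p(x)$, which proves the claim.
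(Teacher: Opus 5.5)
Your proposal is correct and follows the paper's own route: you build the graph of Figure~\ref{fig:power_series} (an identification to copy $x$ for squaring, sequential multiplications for the powers and monomials, and a chain of additions) and then invoke Corollary~\ref{cor:signed_feedforward}. Your induction showing that every rail value stays finite, so that no $\infty+\infty$, $0\cdot\infty$, or $+\infty-\infty$ case can arise, is the admissibility check the paper leaves implicit. It also works for any valid signed dual rail encoding of a real number, because both rails of such an encoding are necessarily finite, so your restriction to canonical standard encodings is harmless but not needed.
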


Matrix multiplication provides another canonical feedforward computation. Each
entry of the product matrix is a finite sum of pairwise products, and hence can
be implemented using only multiplication and addition modules.

\begin{corollary}[Instantaneous matrix multiplication]
Given matrices $M \in \R^{m \times p}$ and $N \in \R^{p \times n}$, the product
$P=MN$ can be computed instantaneously by a reaction network implementing the
corresponding feedforward computational graph. The $(i,j)$ entry is
\[
P_{ij}=\sum_{k=1}^p M_{ik}N_{kj},
\]
which requires $p$ real multiplications and $p-1$ real additions. Thus the full matrix product can be implemented using $mnp$ real multiplication modules and $mn(p-1)$ real addition modules. By Corollary~\ref{cor:signed_feedforward}, all $mn$ output entries are computed instantaneously and simultaneously.
\end{corollary}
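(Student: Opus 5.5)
The plan is to build an explicit feedforward computational graph for the product $P=MN$ and then invoke Corollary~\ref{cor:signed_feedforward}.

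\textbf{Construction.} The sources are the $mp+pn$ entries $M_{ik}$ and $N_{kj}$, each encoded in signed dual rail form. For every triple $(i,k,j)$ we add one real multiplication vertex whose two immediate predecessors are the sources $M_{ik}$ and $N_{kj}$; its output represents $M_{ik}N_{kj}$. For each fixed $(i,j)$ we then add the $p$ products in a left-to-right chain, as in Figure~\ref{fig:power_series}: the first addition takes the products for $k=1$ and $k=2$, and each later addition takes the previous partial sum together with the next product. This chain uses $p-1$ real addition vertices, and its final vertex represents $P_{ij}$. If $p=1$, no additions are needed and the multiplication vertex is itself the output.

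\textbf{Graph properties.} Every edge points either from a source to a multiplication vertex, from a multiplication vertex to an addition vertex, or from one partial sum to the next. So the graph is acyclic, and each non-source vertex has exactly the in-degree its module requires. A source such as $M_{ik}$ feeds $n$ distinct multiplication vertices. This reuse is permitted, because Condition~1 simply sums the resulting depletion terms $h_m$. No module ever takes the same pair as both of its inputs, so no identification vertices are needed. Counting vertices gives $mnp$ multiplications and $mn(p-1)$ additions, as claimed.

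\textbf{Admissibility.} Since the entries lie in $\R$, they are finite. Each product $M_{ik}N_{kj}$ is then finite, with no $0\cdot\infty$ case arising on any rail. Each partial sum is also finite, so the exceptional cases of Remarks~\ref{rem:add_infty} and~\ref{rem:add_canonical} never occur. Concretely, every rail value is a finite nonnegative ratio, so every rail has positive denominator concentration. The only care needed is for sources encoding $0$: in the standard encoding the rail $(1,0)$ represents $0$ and is well defined, so no $0/0$ appears. Hence the input assignment is admissible.

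\textbf{Conclusion.} Initialize every non-source pair at $(0,0)^T$. Corollary~\ref{cor:signed_feedforward} then applies and shows that every vertex, in particular each of the $mn$ output vertices, represents its recursively defined value for all $t>0$. All outputs are correct on the same time interval $t>0$, so they are computed simultaneously. Global existence of the solution is supplied through Corollary~\ref{cor:standard_modules}.
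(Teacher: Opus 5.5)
Your proposal is correct and follows the paper's own route. The paper simply counts $mnp$ real multiplications and $mn(p-1)$ real additions and then invokes Corollary~\ref{cor:signed_feedforward}. Your explicit checks of acyclicity, of source reuse via summed depletion terms, of admissibility for finite entries, and of the $p=1$ case fill in details the paper leaves implicit.
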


Determinants follow in the same way once signed values are available.

\begin{corollary}[Instantaneous determinant]
Let $M\in\R^{n\times n}$ have entries encoded in the signed dual rail form of
Section~\ref{sec:real}. Then
\[
\det M=\sum_{\sigma\in S_n}\sgn(\sigma)\prod_{i=1}^{n}M_{i,\sigma(i)}
\]
can be computed instantaneously by the reaction network implementing the corresponding
feedforward graph, using $n!\,(n-1)$ real multiplication modules and $n!-1$ real addition
modules, where a factor $\sgn(\sigma)=-1$ is implemented by exchanging the positive and
negative rails of the corresponding product.  The conclusion follows from Corollary~\ref{cor:signed_feedforward}.
\end{corollary}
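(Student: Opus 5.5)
The plan is to realize the Leibniz expansion directly as a finite feedforward computational graph in the sense of Definition~\ref{def:computational_graph}, built from real multiplication and real addition vertices, and then invoke Corollary~\ref{cor:signed_feedforward}.

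\textbf{Source layer.} Take the $n^2$ entries $M_{ij}$ as signed dual rail source vertices.

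\textbf{Product chains.} For each $\sigma\in S_n$, build a chain of $n-1$ real multiplication modules: first multiply $M_{1,\sigma(1)}$ by $M_{2,\sigma(2)}$, then multiply that output by $M_{3,\sigma(3)}$, and so on. This uses $n!(n-1)$ multiplications in total. Within a single product the factors $M_{i,\sigma(i)}$ have distinct row indices, so no module takes the same pair as both of its inputs and no identification copies are needed. A given entry may feed many modules, which Condition~1 permits since the depletion terms simply add.

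\textbf{Signs.} For odd $\sigma$, do not add a module. Instead, relabel the output quadruple of the chain, declaring its positive rail to be the negative rail and vice versa. Since $a_p-a_n$ becomes $a_n-a_p=-a$, the encoded value is negated. This relabeling changes no reactions or ODEs, so Conditions~1 and~2 are untouched and the pairs keep their zero initial conditions. The only thing to check is that the downstream addition consumes the swapped pairs with the roles interchanged; consumption remains symmetric within each computational pair, so this is harmless.

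\textbf{Summation.} Combine the $n!$ signed products with a chain (or balanced tree) of $n!-1$ real addition modules. The resulting graph is acyclic, and every non-source pair starts at $(0,0)^T$.

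\textbf{Admissibility.} For finite real entries, every intermediate value is finite, so none of the undefined cases arise: there is no $0\cdot\infty$, no $\infty+\infty$, and no $+\infty-\infty$. Remark~\ref{rem:mult_canonical} and Remark~\ref{rem:add_canonical} identify these as the only undefined cases, so the graph is admissible.

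\textbf{Conclusion.} Corollary~\ref{cor:signed_feedforward} then gives that the output quadruple represents $\sum_\sigma \sgn(\sigma)\prod_i M_{i,\sigma(i)}=\det M$ for all $t>0$.

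\textbf{Main obstacle.} The only nonroutine point is justifying the sign implementation by rail exchange rather than by a module. I would handle it by noting that Corollary~\ref{cor:signed_feedforward} concerns the underlying nonnegative graph, where a swap is purely a relabeling of which pair is called positive. Correctness on every rail is therefore unaffected, and only the final interpretation of the signed difference changes.
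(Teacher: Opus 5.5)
Your proposal is correct and takes the same route as the paper. The paper simply realizes the Leibniz sum as a feedforward graph of $n!\,(n-1)$ real multiplications and $n!-1$ real additions, handles odd $\sigma$ by rail exchange, and invokes Corollary~\ref{cor:signed_feedforward}. Your added details are sound: the rail swap is just rewiring of which nonnegative pairs feed which addition, so Corollary~\ref{cor:standard_modules} applies on every rail. Admissibility is cleanest seen directly rather than by treating the remarks as an exhaustive list of undefined cases: finite entries force finite rails, so every nonnegative module has $F_0=a_0b_0>0$.
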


\section{Alternative module designs}
\label{sec:alt_modules}

The two structural conditions verified in Sections~\ref{sec:cond1} and~\ref{sec:cond2} --- the symmetric downstream consumption condition and the ratio realization condition --- admit many solutions beyond the standard modules defined in Section~\ref{sec:fundamental}. By Theorem~\ref{thm:feedforward}, any such module can be used in a feedforward computation while preserving instantaneous ratio correctness. This flexibility means the framework is not tied to a single implementation and can accommodate physical or engineering constraints --- in particular, the requirement that certain input species not be consumed by a module. We illustrate this with alternative designs for both the addition and multiplication modules, verifying both structural conditions explicitly in each case.

\subsection{Alternative addition modules}
\label{sec:alt_add}

The following module performs addition $(a,b)\mapsto x = a+b$ without consuming the $A$ input:
\begin{equation} \label{net:addition_alt1}
\begin{aligned}
    A_0 + B_0 &\to X_0 + A_0, \quad 
    A_0 + B_1 \to X_1 + A_0, \\
    A_1 + B_0 &\to X_1 + A_1, \quad 
    A_1 + B_1 \to A_1.  
\end{aligned}
\end{equation}

\paragraph{Condition 1.}
Since $A$ is returned as a product in every reaction of \eqref{net:addition_alt1}, it is not consumed by this module, so this module contributes $h_m(t) = 0$ to the ODE for $(a_0,a_1)^T$. For the $B$ pair, each molecule of $B_0$ or $B_1$ is consumed at rate $a_0(t)+a_1(t)$, giving $h_m(t) = a_0(t)+a_1(t)$, the same as in the standard addition module. In both cases the consumption is symmetric across the two species in each pair, confirming Condition~1.

\paragraph{Condition 2.}
The output dynamics are
\begin{equation}
\frac{d}{dt} \begin{pmatrix} x_0(t) \\ x_1(t) \end{pmatrix}
= \begin{pmatrix} a_0(t) b_0(t) \\ a_0(t) b_1(t) + a_1(t) b_0(t) \end{pmatrix}
- H(t) \begin{pmatrix} x_0(t) \\ x_1(t) \end{pmatrix}
= \begin{pmatrix} a_0(t) \\ a_1(t) \end{pmatrix} \oplus \begin{pmatrix} b_0(t) \\ b_1(t) \end{pmatrix}
- H(t) \begin{pmatrix} x_0(t) \\ x_1(t) \end{pmatrix},
\end{equation}
which is identical to the standard addition module \eqref{net:addition_again}. Therefore Condition~2 holds and $x_1(t)/x_0(t) = a+b$ for all $t>0$ by Lemma~\ref{lem:single_module}.

\hspace{.1in}

For a construction in which neither input is consumed, one could use:
\begin{equation} \label{net:addition_alt2}
\begin{aligned}
    &A_0 + B_0 \to X_0 + A_0 + B_0, \quad 
    &&A_0 + B_1 \to X_1 + A_0 + B_1, \\
    &A_1 + B_0 \to X_1 + A_1 + B_0, \quad
    &&A_1 + B_1 \to A_1 + B_1.  
\end{aligned}
\end{equation}
The output dynamics are again identical to \eqref{net:addition_again}, so Condition~2 holds. For Condition~1, both $A$ and $B$ are returned as products, so this module contributes $h_m(t) = 0$ to the ODEs for both $(a_0,a_1)^T$ and $(b_0,b_1)^T$.

\subsection{Alternative multiplication modules}
\label{sec:alt_mult}

An alternative multiplication module in which input $A$ is not consumed is:
\begin{equation} \label{net:multiplication_alt1}
\begin{aligned}
    A_0 + B_0 &\to X_0 + A_0, \quad
    A_1 + B_1 \to X_1 + A_1, \\
    A_0 + B_1 &\to A_0, \quad
    A_1 + B_0 \to A_1.
\end{aligned}
\end{equation}

\paragraph{Condition 1.}
Since $A$ is returned as a product in every reaction of \eqref{net:multiplication_alt1}, it is not consumed, so this module contributes $h_m(t) = 0$ to the ODE for $(a_0,a_1)^T$. For the $B$ pair, each molecule of $B_0$ or $B_1$ is consumed at rate $a_0(t)+a_1(t)$, giving $h_m(t) = a_0(t)+a_1(t)$, the same as in the standard multiplication module. Condition~1 holds in both cases.

\paragraph{Condition 2.}
The output dynamics are
\begin{equation}
\frac{d}{dt} \begin{pmatrix} x_0(t) \\ x_1(t) \end{pmatrix}
= \begin{pmatrix} a_0(t) b_0(t) \\ a_1(t) b_1(t) \end{pmatrix}
- H(t) \begin{pmatrix} x_0(t) \\ x_1(t) \end{pmatrix}
= \begin{pmatrix} a_0(t) \\ a_1(t) \end{pmatrix} \otimes \begin{pmatrix} b_0(t) \\ b_1(t) \end{pmatrix}
- H(t) \begin{pmatrix} x_0(t) \\ x_1(t) \end{pmatrix},
\end{equation}
which is identical to the standard multiplication module \eqref{net:multiplication_again}. Therefore Condition~2 holds and $x_1(t)/x_0(t) = ab$ for all $t>0$ by Lemma~\ref{lem:single_module}.

\vspace{.1in}

For a construction in which neither input is consumed, one could use:
\begin{equation} \label{net:multiplication_alt2}
\begin{aligned}
    A_0 + B_0 &\to X_0 + A_0 + B_0, \quad
    A_1 + B_1 \to X_1 + A_1 + B_1, \\
    A_0 + B_1 &\to A_0 + B_1, \quad
    A_1 + B_0 \to A_1 + B_0.
\end{aligned}
\end{equation}
The output dynamics are again identical to \eqref{net:multiplication_again}, so Condition~2 holds. Both $A$ and $B$ are returned as products, so this module contributes $h_m(t) = 0$ to the ODEs for both $(a_0,a_1)^T$ and $(b_0,b_1)^T$, confirming Condition~1.
We note that the non-consuming variants introduce no finite-time blow-up: as shown in Proposition~\ref{prop:global} of Appendix~\ref{app:global}, every network built from the modules of this paper --- standard or alternative, with or without decay --- has a solution defined for all $t \ge 0$.

\subsection{Unbounded growth and decay reactions}
\label{sec:decay}

In all non-consuming constructions, the output species $(X_0, X_1)$ are produced but never consumed by the module itself, so their concentrations can grow without bound if $H(t) = 0$ (i.e., if the output pair is not consumed by any downstream module). To prevent this while preserving both structural conditions, one may add decay reactions
\[
X_0 \to 0, \qquad X_1 \to 0.
\]
These reactions consume $X_0$ and $X_1$ symmetrically at rate $1$ per molecule, contributing $h_m(t) = 1$ to $H(t)$ in both equations for $(x_0,x_1)^T$. These decay reactions contribute the same type of symmetric depletion term as an identification or inversion module, and Lemma~\ref{lem:single_module} still applies.
 The ratio $x_1(t)/x_0(t)$ is therefore unaffected.

\section{Discussion}
\label{sec:discussion}

The central contribution of this paper is a reaction network framework that computes arithmetic operations \emph{instantaneously}, meaning that the output ratio $x_1(t)/x_0(t)$ equals the correct value for all $t > 0$, not merely in the limit as $t \to \infty$. This stands in contrast to our earlier work \cite{anderson2025arithmetic}, where computations are completed within a finite time that is independent of the inputs but still positive. 
Thus, at the level of the encoded ratio, the output is correct for every $t > 0$.

However, an honest accounting of the overall computation time must include the cost of decoding the output. The reaction network encodes all values as ratios of computational pairs, and the final answer is stored as the ratio $x_1(t)/x_0(t)$ of two chemical concentrations --- or, in the signed case, as a quadruple representing $x_{p1}/x_{p0} - x_{n1}/x_{n0}$. Extracting a usable number requires a separate decoding step whose complexity depends on the encoding used. For non-negative computations over $[0,+\infty]$, decoding requires computing a single ratio $x_1/x_0$, together with the convention that $x_0 = 0$ is interpreted as $+\infty$. For signed computations over $[-\infty,+\infty]$, decoding requires two divisions, a subtraction, and appropriate handling of the cases where one or both rails encode $\pm\infty$. The total speed of the computation depends critically on how this decoding step is performed.

Two natural scenarios arise. In the first, the decoding is itself performed chemically. The operations needed --- division and rectified subtraction (i.e.\ $\max(a-b,0)$ and $\max(b-a,0)$, which together reconstruct a signed difference) --- are available at input-independent speed via \cite{anderson2025arithmetic}, making that paper a natural companion for the chemical decoding step. 
In the second scenario, the decoding is performed on a separate digital platform, which measures the relevant chemical concentrations and computes the required ratios and differences directly. 
In this case the post-measurement arithmetic is negligible; the remaining practical requirement is an efficient analog-digital interface capable of resolving the relevant concentrations. 
The winning solution, among the two scenarios, depends on the application and the hardware available.

\paragraph{Ratio correctness versus physical readout.}
The composability theorem guarantees that the encoded ratio is correct for every $t>0$, but this does not by itself guarantee that the ratio can be measured accurately at arbitrarily small times. Since every non-source computational pair is initialized at $(0,0)$, the concentrations representing the correct ratio may initially be very small.

To make this distinction explicit, suppose a finite nonnegative output value $r$ satisfies
\[
x_1(t)=r x_0(t),
\]
and that the measured concentrations are
\[
\widehat{x}_i(t)=x_i(t)+e_i(t),\qquad |e_i(t)|\le \eta,\qquad i=0,1.
\]
If the measured ratio is
\[
\widehat r=\frac{\widehat{x}_1(t)}{\widehat{x}_0(t)}
\]
and $x_0(t)>\eta$, then
\[
|\widehat r-r|
\le
\frac{(1+r)\eta}{x_0(t)-\eta}.
\]
Thus, if we want $|\widehat r-r|\le\varepsilon$ for some $\varepsilon>0$, then it is sufficient that
\[
x_0(t)\ge \eta+\frac{(1+r)\eta}{\varepsilon}.
\]

Thus, although the encoded ratio is mathematically correct for every positive time, accurate physical readout may require waiting until the relevant concentrations are sufficiently large compared with the measurement error. Quantifying this readout requirement as a function of graph structure, module design, and measurement precision is a separate problem.

Speed, however, is not the only resource that matters in chemical computation. The constructions in this paper optimize on time at the cost of other resources, and this trade-off deserves attention. Computing a function via a truncated power series requires committing to a fixed level of approximation in advance. More significantly, each additional term in the power series requires additional arithmetic operations and therefore additional chemical species --- the number of species grows with the degree of the approximation. In applications where the number of available species is limited, or where a simpler implementation is preferred, it may be more practical to employ specialized algorithms designed for specific functions. For example, the constructions in \cite{anderson2026computing} for the exponential and logarithm use a small fixed number of chemical species regardless of the desired accuracy, at the cost of giving up instantaneous computation. The right choice between these approaches depends on the specific constraints of the application.

\subsection*{Acknowledgments}

DFA gratefully acknowledges support from the Trustees of the William F. Vilas Estate, and via NSF grant DMS-2051498.  Support for this research was also provided by the University of Wisconsin-Madison, Office of the Vice Chancellor for Research with funding from the Wisconsin Alumni Research Foundation.
BJ is grateful for support from AMS-Simons Research Enhancement Grants for Primarily Undergraduate Institution (PUI) and NSF DMS-2051498.

\appendix

\section{Global existence}
\label{app:global}

\begin{proposition}[Global existence]
\label{prop:global}
Let $\GG$ be a feedforward computational graph in which each non-source vertex is implemented by one of the
modules of Section~\ref{sec:fundamental} or by one of the alternative modules of
Section~\ref{sec:alt_modules}, with or without the decay reactions of Section~\ref{sec:decay}.
Then, for every nonnegative initial condition, the associated mass-action system has a unique
solution, and this solution remains nonnegative and finite on all of $[0,\infty)$.
\end{proposition}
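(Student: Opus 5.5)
Local existence and uniqueness follow from Picard--Lindelöf: the mass-action vector field is polynomial, hence locally Lipschitz. Nonnegativity is the standard property of mass-action systems. Every reaction consuming a species has that species as a reactant, so on the face $\{y_i=0\}$ the $i$th component of the vector field is nonnegative. Hence the nonnegative orthant is forward invariant on the maximal interval of existence. It then remains to rule out finite-time blow-up. I will show that every concentration is bounded on every bounded time interval $[0,T]$ contained in the existence interval.

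The argument is an induction along a topological ordering $v_1,\dots,v_N$ of $\GG$, with source vertices first. It uses the structure already established in Section~\ref{sec:cond1}. For each pair $(S_{v,0},S_{v,1})$, every module (standard or alternative) and every decay reaction acts on the pair either as a symmetric depletion $-h_m(t)(s_{v,0},s_{v,1})^T$ with $h_m\ge 0$, or not at all (catalytic use, $h_m=0$). The production terms come only from the unique producing module. Therefore each pair satisfies
\[
\dot s_{v,i} = F_{v,i}(t) - H_v(t)s_{v,i}, \qquad H_v\ge 0,
\]
where $F_{v,i}$ is a polynomial in the concentrations of the immediate predecessors of $v$ only. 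The possible forms are $a_i$, $a_{1-i}$, $a_0b_0$, $a_1b_1$, or $a_0b_1+a_1b_0$. For a source vertex $F\equiv 0$, so $0\le s_{v,i}(t)\le s_{v,i}(0)$. For a non-source vertex, assume inductively that all predecessor concentrations are bounded on $[0,T]$, say by $C_T$. Then $\dot s_{v,i}\le F_{v,i}(t)\le 2C_T^2+C_T$, using $H_v\ge0$ and nonnegativity. This gives $s_{v,i}(t)\le s_{v,i}(0)+(2C_T^2+C_T)T$. The proposition allows arbitrary nonnegative initial conditions, so the initial value is kept in the bound. The induction closes, and every component is bounded on any finite interval.

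The step needing the most care is making this induction rigorous despite the feedback through $H_v$. $H_v$ depends on downstream species, so the system is not literally triangular. The key observation is that the upper bound never needs $H_v$, only $H_v\ge 0$, which holds by nonnegativity. Concretely, I will argue by contradiction. Suppose the maximal existence time $T^*$ is finite. Then some component is unbounded on $[0,T^*)$. Yet the inductive estimates above, carried out on $[0,T^*)$ with $T=T^*$, bound every component uniformly. This contradicts the standard blow-up alternative for ODEs, so $T^*=\infty$.

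One further check: I must confirm that in the non-consuming alternative modules and the decay reactions no reaction consumes an input asymmetrically. Otherwise the form $-H_v s_v$ could fail, although nonnegativity and the upper bound would still hold. In fact the argument only uses that all consumption terms are nonpositive, so it does not even depend on symmetry. It therefore applies uniformly to all modules in Sections~\ref{sec:fundamental} and~\ref{sec:alt_modules}, with or without decay.
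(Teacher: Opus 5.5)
Your proposal is correct and follows essentially the same argument as the paper: local well-posedness from the polynomial vector field, forward invariance of the nonnegative orthant, then a contradiction with a finite maximal time via induction along a topological ordering, which bounds each output pair using $\dot z_i \le F_i$ and needs only $H\ge 0$, never $H$ itself. The differences are cosmetic: you use the bound $2C_T^2+C_T$ where the paper uses $2M^2$ with $M\ge 1$, and you add the remark that symmetry of consumption is not actually needed for the bound.
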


\begin{proof}
    The vector field is polynomial, hence locally Lipschitz, so there is a unique solution on a maximal interval $[0,t_{\max})$.  The nonnegative orthant is forward invariant because every negative term in the ODE of a species contains that species as a factor.

    Suppose, toward a contradiction, that $t_{\max}<\infty$. Order the vertices of $\GG$ topologically. We show inductively that every concentration is bounded on $[0,t_{\max})$.

For a source pair, $F_0=F_1=0$, and hence
\[
z_i'(t)=-H(t)z_i(t)\le 0,
\]
so $z_i(t)\le z_i(0)$ for $i=0,1$.

Now consider a non-source vertex and suppose that all concentrations at its
immediate predecessors are bounded on $[0,t_{\max})$. Choose $M\ge 1$ that
bounds all of these concentrations. By Condition~1, its output pair satisfies
\[
\frac{d}{dt}
\begin{pmatrix}z_0(t)\\z_1(t)\end{pmatrix}
=
\begin{pmatrix}F_0(t)\\F_1(t)\end{pmatrix}
-
H(t)\begin{pmatrix}z_0(t)\\z_1(t)\end{pmatrix},
\]
with $H(t)\ge0$. For identification and inversion, each $F_i$ is bounded by
$M$; for multiplication, by $M^2$; and for addition, by $2M^2$. Thus in every
case $F_i(t)\le2M^2$, and therefore
\[
z_i(t)\le z_i(0)+\int_0^t F_i(u)\,du
\le z_i(0)+2M^2t_{\max},
\qquad 0\le t<t_{\max}.
\]
Hence the output pair is bounded on $[0,t_{\max})$.

Proceeding through the finite topological ordering shows that every
concentration is bounded on $[0,t_{\max})$, contradicting maximality of
$t_{\max}$. Therefore $t_{\max}=\infty$.
\end{proof}

The same induction run with time-dependent bounds shows that, for each fixed computational graph, every concentration grows at most polynomially in time, with the degree depending on the depth and structure of $\GG$.

\bibliographystyle{unsrt}
\bibliography{frac}

\begin{thebibliography}{10}

\bibitem{anderson2021reaction}
David~F. Anderson, Badal Joshi, and Abhishek Deshpande.
\newblock On reaction network implementations of neural networks.
\newblock {\em Journal of the Royal Society Interface}, 18(177):20210031, 2021.

\bibitem{cappelletti2020stochastic}
Daniele Cappelletti, Andres Ortiz-Mu{\~n}oz, David~F. Anderson, and Erik
  Winfree.
\newblock Stochastic chemical reaction networks for robustly approximating
  arbitrary probability distributions.
\newblock {\em Theoretical Computer Science}, 801:64--95, 2020.

\bibitem{chen2023rate}
Ho-Lin Chen, David Doty, Wyatt Reeves, and David Soloveichik.
\newblock Rate-independent computation in continuous chemical reaction
  networks.
\newblock {\em Journal of the ACM}, 70(3):1--61, 2023.

\bibitem{chen2014deterministic}
Ho-Lin Chen, David Doty, and David Soloveichik.
\newblock Deterministic function computation with chemical reaction networks.
\newblock {\em Natural Computing}, 13:517--534, 2014.

\bibitem{qian2011neural}
Lulu Qian, Erik Winfree, and Jehoshua Bruck.
\newblock {Neural network computation with DNA strand displacement cascades}.
\newblock {\em Nature}, 475(7356):368--372, 2011.

\bibitem{QSW2011}
Lulu Qian, David Soloveichik, and Erik Winfree.
\newblock Efficient {T}uring-universal computation with {DNA} polymers.
\newblock In {\em {DNA Computing and Molecular Programming 16}}, pages
  123--140, 2011.

\bibitem{qian2011simple}
Lulu Qian and Erik Winfree.
\newblock {A simple DNA gate motif for synthesizing large-scale circuits}.
\newblock {\em J. R. Soc. Interface}, 8(62):1281--1297, 2011.

\bibitem{soloveichik2010dna}
David Soloveichik, Georg Seelig, and Erik Winfree.
\newblock {DNA} as a universal substrate for chemical kinetics.
\newblock {\em Proceedings of the National Academy of Sciences},
  107(12):5393--5398, 2010.

\bibitem{qian2011scaling}
Lulu Qian and Erik Winfree.
\newblock {Scaling up digital circuit computation with DNA strand displacement
  cascades}.
\newblock {\em Science}, 332(6034):1196--1201, 2011.

\bibitem{fages2017strong}
Fran{\c{c}}ois Fages, Guillaume Le~Guludec, Olivier Bournez, and Amaury Pouly.
\newblock Strong {T}uring completeness of continuous chemical reaction networks
  and compilation of mixed analog-digital programs.
\newblock In {\em International Conference on Computational Methods in Systems
  Biology}, pages 108--127. Springer, 2017.

\bibitem{doty2025analog}
David Doty, Mina Latifi, and David Soloveichik.
\newblock Analog computation with transcriptional networks.
\newblock {\em arXiv preprint arXiv:2508.14017}, 2025.

\bibitem{anderson2025arithmetic}
David~F. Anderson and Badal Joshi.
\newblock Chemical mass-action systems as analog computers: Implementing
  arithmetic computations at specified speed.
\newblock {\em Theoretical Computer Science}, 1025:114983, 2025.

\bibitem{anderson2026computing}
David~F. Anderson, Badal Joshi, and Tung~D. Nguyen.
\newblock Computing with reaction networks at input-independent speed:
  exponential and logarithmic functions.
\newblock {\em arXiv preprint arXiv:2604.08859}, 2026.

\bibitem{salehi2017chemical}
Sayed~Ahmad Salehi, Keshab~K Parhi, and Marc~D. Riedel.
\newblock Chemical reaction networks for computing polynomials.
\newblock {\em ACS Synthetic Biology}, 6(1):76--83, 2017.

\bibitem{salehi2018computing}
Sayed~Ahmad Salehi, Xingyi Liu, Marc~D. Riedel, and Keshab~K Parhi.
\newblock Computing mathematical functions using {DNA} via fractional coding.
\newblock {\em Scientific Reports}, 8(1):8312, 2018.

\bibitem{solanki2023computing}
Arnav Solanki, Tonglin Chen, and Marc Riedel.
\newblock Computing mathematical functions with chemical reactions via
  stochastic logic.
\newblock {\em PLOS ONE}, 18(5):e0281574, 2023.

\end{thebibliography}

\end{document}